\documentclass[review,amsmath]{elsarticle}

\usepackage{amsmath,amsthm,amssymb,subeqnarray,amsfonts,graphics,color}
\usepackage{amsmath,amsthm,amssymb}
\usepackage{graphicx,subfigure}
\usepackage{tikz}
\usepackage{epstopdf}

\usepackage{setspace}
\usepackage{stmaryrd}
\usepackage{cases}
\usepackage{exscale}
\usepackage{relsize}

\usepackage{graphicx}
\usepackage{float}

\usepackage{booktabs}
\usepackage{threeparttable}
\usepackage{multirow}

\usepackage{algorithm}
\usepackage{algpseudocode}
\usepackage{amsmath}
\allowdisplaybreaks[4]

\usepackage[footnotesize]{caption2}

\numberwithin{equation}{section}
\numberwithin{table}{section}
\numberwithin{figure}{section}

\newtheorem{theorem}{Theorem}[section]

\newtheorem{remark}{Remark}[section]
\newtheorem{lemma}{Lemma}[section]

\biboptions{numbers,sort&compress}

\journal{}
\begin{document}

\begin{frontmatter}
\title{The compactness of Moser-Trudinger functionals with conical metric in the unit ball}
\tnotetext[t1]{ This work is partially supported by the NSF of China under grant Nos. 12001466 and  U19A2079.}

\author[a,b]{Qi Xia \corref{cor1}}
\ead{xq19991231@mail.dlut.edu.cn}

\author[a]{Yufeng Lu}
\ead{luf@dlut.edu.cn}

\address[a]{School of Mathematical Sciences,
Dalian University of Technology, Dalian, 116024, China}

\address[b]{DUT-BSU Joint Institute,
Dalian University of Technology, Dalian, 116024, China}

\cortext[cor1]{Corresponding author}

\begin{abstract}
Let $\mathbb{B}$ be the unit ball in $\mathbb{R}^2$, $W_0^{1,2} \left( \mathbb{B} \right)$ is a standard Sobolev space. Suppose a function $h_{\epsilon}(x)$ is radially symmetric, nonnegative, continuous on $\overline{\mathbb{B}}$ and satisﬁes $\underset{x \rightarrow 0}{\lim} h_{\epsilon}(x) |x|^{- 2 \epsilon} =1 $, with $h_{\epsilon} (x) >0$ on $\overline{\mathbb{B}} \setminus \{0\}$. In \citep{26}, Zhang proved that the supremum in the following inequality can be attained by some function $u_{\epsilon}$, i.e. ,  
\begin{flalign}
	\int_{ \mathbb{B} } h_{\epsilon} (x) e^{ 4 \pi \left(1 + \epsilon \right) {u_{\epsilon}}^2 }  dx = \underset{u \in W_0^{1,2} \left( \mathbb{B} \right) \cap \mathcal{S} \setminus \{0\} , ~ \int_{ \mathbb{B} } |\nabla u|^2 dx \leq 1}{\sup} \int_{\mathbb{B}} h_{\epsilon} (x) e^{4 \pi (1 + \epsilon) u^2 } dx, \label{eq: 0.1}
\end{flalign}
where $4 \pi$ is the best constant in the classical Moser-Trudinger inequality, and $\mathcal{S}$ is the set of radially symmetric functions.
In this paper, we consider the compactness of the sequence $\{ u_{\epsilon} \}_{\epsilon} $ and prove that the limit of this sequence is a function $u_0 \in C^1 \left(\overline{ \mathbb{B}} \right)$. Moreover, the $u_0$ is an extremal function of the
supremum
\begin{flalign*}
	\underset{u \in W_0^{1,2} \left( \mathbb{B} \right) \cap \mathcal{S} \setminus \{0\} , ~ \int_{ \mathbb{B} } |\nabla u|^2 dx \leq 1}{\sup} \int_{\mathbb{B}} e^{4 \pi u^2 } dx.
\end{flalign*}
\end{abstract}

\begin{keyword}
Moser-Trudinger Inequality, conical metric, blow-up analysis, compactness
\end{keyword}

\end{frontmatter}

\section{Introduction}
Let $\Omega$ be a smooth bounded domains in $\mathbb{R}^2$, and $ W_0^{1,2} \left( \Omega \right)$ is a standard Sobolev space with norm
\begin{flalign*}
	|| \cdot ||_{W_0^{1,2} \left( \Omega \right) } := \left( \int_{\Omega} |\nabla \cdot|^2 dx \right)^{ \frac{1}{2} },
\end{flalign*}
where $\nabla$ is the gradient operator. Then the classical Moser-Trudinger is
\begin{flalign*}
	\underset{u \in W_0^{1,2} \left( \Omega \right) \setminus \{0\}, ~ ||u||_{ W_0^{1,2} \left( \Omega \right) } \leq 1 }{\sup} \int_{\Omega} e^{ \alpha |u|^2 } dx < \infty.
\end{flalign*} 
For more details and  the origin of the Moser-Trudinger inequality, see \citep{20,21}. This inequality is sharp in the sense that, for every $\alpha > 4 \pi$, all integrals above are finite, but the supremum is not, for instance, \cite{17}, while the existence of extremal functions was established in \citep{4,5}.

In reference \citep{19,27}, Csat\'{o} and Yang  extended the sharp form of the Moser-Trudinger inequality and proved the existence of extremal functions $u_{\beta} \in W_0^{1,2} \left( \Omega \right) \cap C^0 \left( \overline{\Omega } \right) \cap C^1_{loc} \left( \Omega \right)$ as follows,
\begin{flalign*}
	\int_{\Omega} \frac{ e^{4 \pi \left( 1 - \beta \right) u_{\beta}^2 } }{|x|^{2 \beta} } dx = \underset{u \in W_0^{1,2} \left( \Omega \right) \setminus \{0\}, ~ \int_{\Omega} | \nabla u|^2 dx \leq 1 }{\sup} \int_{\Omega} \frac{ e^{4 \pi \left( 1 - \beta \right) u^2} }{|x|^{2 \beta} } dx, ~ \beta >0.
\end{flalign*} 
However, the existence of extremal functions cannot be extended to $\beta <0$. On the other hand, 
from the work of Calanchi and Terraneo \citep{6}, the following result holds, 
\begin{flalign}
	\underset{u \in W_0^{1,2} \left( \mathbb{B} \right) \bigcap \mathcal{S} \setminus \{0 \}, ||u||_{ W_0^{1,2} \left( \mathbb{B} \right) } \leq 1 }{\sup} \int_{\mathbb{B}} |x|^{ 2 \epsilon } e^{4 \pi \left( 1 +\epsilon \right) u^2 } dx < \infty, \label{eq: 1.1}
\end{flalign}
where $\mathcal{S} $ denotes the set of all radially symmetric functions. Also, in the references \cite{7,8,16}, Calanchi, Terraneo and others proved that there exist extremal functions $ u \in W_0^{1,2} \left( \mathbb{B} \right) \bigcap \mathcal{S}$ such that
\begin{flalign}
	\underset{u \in W_0^{1,2} \left( \mathbb{B} \right)  \setminus \{0 \} \bigcap \mathcal{S}, ||u||_{ W_0^{1,2} \left( \mathbb{B} \right) } \leq 1 }{\sup} \int_{\mathbb{B}} |x|^{ 2 \epsilon } e^{4 \pi \left( 1 + \epsilon \right) u^2 } dx = \int_{\mathbb{B}} |x|^{ 2 \epsilon } e^{4 \pi \left( 1 + \epsilon \right) u_{\epsilon}^2 } dx. \label{eq: 1.2}
\end{flalign}

In \citep{1}, Yang and Zhu extended equation \eqref{eq: 1.1} to high dimensions.
Equation \eqref{eq: 1.2} also holds in high dimensions. See \citep{2}. Moreover, in \citep{26}, Zhang extended the conical metric $|x|^{2 \epsilon}$ to a family of general functions $h_{\epsilon}(x)$ mentioned in the abstract. Also, the supremum of the inequality can be attained by a radially symmetric, nonnegative function, which is also continuous on the disk. In \citep{26}, Zhang proved that there exists a function that attains the supremum of the functional,
\begin{flalign*}
	\int_{\mathbb{B}} h_{\epsilon} (x) e^{ 4 \pi ( 1+ \epsilon) u^2 } dx,
\end{flalign*}
where $h_{\epsilon}(x)$ satisfies $\underset{\epsilon \rightarrow 0^+}{\lim} h_{\epsilon}(x) |x|^{ -2 \epsilon} =1$. 

Recently, Yang and Wang considered the compactness of extremals for critical singular Trudinger-Moser 
functionals in \cite{18}. Meanwhile, this work also provided a more delicate method to analyze the behavior of asymptotic symmetry than the method in \cite{19}. In \cite{3}, Shan and Li considered the Moser-Trudinger functional with the conical metric in the unit ball of $\mathbb{R}^2$, defined as follows,
\begin{flalign*}
	TM_{\epsilon} (u) := \int_{\mathbb{B} } e^{ 4 \pi \left( 1+ \epsilon \right) u^2 } dg.
\end{flalign*}
They obtained that there exist radially symmetric functions $u_{\epsilon}$ satisfying
\begin{flalign*}
	TM_{\epsilon} \left( u_{\epsilon} \right) = \underset{ u \in W_0^{1,2} \left( \mathbb{B} \right) \setminus \{0\}, ~ ||u||_{ W_0^{1,2} \left( \mathbb{B} \right) } \leq 1 }{\sup} TM_{\epsilon} (u)
\end{flalign*}
for any $\epsilon >0$. Meanwhile,
these functions are also extremal functions by a rearrangement argument. Furthermore, the maximizer sequence $\{ u_{\epsilon} \}_{\epsilon}$ converges as the conical
metric $g$ converges to the standard Euclidean metric. Then we consider whether these results hold in a more general family of functions. 
The principal contributions of this work are outlined below,
\begin{theorem}
	 Assume that $u_{\epsilon}$ is a sequence of maximizers for the supremum in \eqref{eq: 0.1}. Suppose a function $h_{\epsilon}(x)$ is radially symmetric, nonnegative,
	 continuous on $\overline{\mathbb{B}}$ and satisﬁes $h_{\epsilon} (x) > 0$ on $\overline{ \mathbb{B} } \setminus \{0\}$ and
	 \begin{flalign*}
	 	& \underset{\epsilon \rightarrow 0}{\lim} h_{\epsilon} (x) = |x|^{2 \epsilon} \cdot h_0 (x),\\
		& \underset{x \rightarrow 0}{\lim} h_{\epsilon} (x) |x|^{ - 2 \epsilon} =1.
	\end{flalign*} 
	 Then, up to a subsequence, there exists a function $u_0$ such that $u_{\epsilon } \rightarrow u_0$ in $C^1 ( \overline{\mathbb{B}} )$ and $u_0$ is an extremal function of the following supremum
	\begin{flalign}
		\underset{u \in W_0^{1, 2} ( \overline{\mathbb{B}} ) \cap \mathcal{S} \setminus \{0\} , ~ ||u||_{W_0^{1, 2} ( \overline{\mathbb{B}}) }  \leq  1}{\sup} \int_{\mathbb{B} } h_{0} (x) e^{ 4 \pi u^{2} }  dx. \label{1.3}
	\end{flalign}
\end{theorem}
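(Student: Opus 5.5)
The plan is the standard blow-up analysis, as in Yang--Wang \cite{18} and Shan--Li \cite{3}. The first step is the Euler--Lagrange equation. Since $u_\epsilon$ attains \eqref{eq: 0.1}, we may take $u_\epsilon\ge0$, radial, with $\|\nabla u_\epsilon\|_2=1$. It satisfies
\begin{flalign*}
-\Delta u_\epsilon=\frac{1}{\lambda_\epsilon}h_\epsilon(x)u_\epsilon e^{4\pi(1+\epsilon)u_\epsilon^2},\qquad \lambda_\epsilon=\int_{\mathbb{B}}h_\epsilon u_\epsilon^2e^{4\pi(1+\epsilon)u_\epsilon^2}dx .
\end{flalign*}
Next I compare suprema. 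For any fixed radial $v$ with $\|\nabla v\|_2\le1$, the hypothesis $h_\epsilon\to|x|^{2\epsilon}h_0$ together with Fatou/dominated convergence gives
\begin{flalign*}
\liminf_{\epsilon\to0}\int_{\mathbb{B}} h_\epsilon e^{4\pi(1+\epsilon)u_\epsilon^2}\ge \int_{\mathbb{B}} h_0 e^{4\pi v^2}.
\end{flalign*}
Hence $\liminf_\epsilon \sup_\epsilon \ge \sup_0$, where $\sup_0$ denotes the supremum in \eqref{1.3}. By Carleson--Chang \cite{4}, since $h_0(0)=1$ by the normalization at the origin, this supremum is strictly larger than $\pi(1+e)$, the Carleson--Chang concentration level at $x=0$. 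Along a subsequence $u_\epsilon\rightharpoonup u_0$ weakly in $W^{1,2}_0$ and a.e., and $\liminf\lambda_\epsilon>0$.

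The main step is to rule out blow-up, i.e.\ to show that $c_\epsilon=u_\epsilon(0)=\max u_\epsilon$ stays bounded. Suppose instead $c_\epsilon\to\infty$. Radial symmetry and the radial lemma $|u(x)|\le C\sqrt{\log(1/|x|)}$ force concentration at the origin, so $|\nabla u_\epsilon|^2dx\rightharpoonup\delta_0$ and $u_0\equiv0$. I would rescale with
\begin{flalign*}
r_\epsilon^{2+2\epsilon}=\lambda_\epsilon c_\epsilon^{-2}e^{-4\pi(1+\epsilon)c_\epsilon^2},
\end{flalign*}
setting $\varphi_\epsilon(x)=c_\epsilon\bigl(u_\epsilon(r_\epsilon x)-c_\epsilon\bigr)$. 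The weight $h_\epsilon(r_\epsilon x)\approx r_\epsilon^{2\epsilon}|x|^{2\epsilon}$ is absorbed by this choice of $r_\epsilon$, so $\varphi_\epsilon\to\varphi$ in $C^1_{loc}$, where $-\Delta\varphi=e^{8\pi\varphi}$ and $\varphi=-\frac{1}{4\pi}\log(1+\pi|x|^2)$. I then show $c_\epsilon u_\epsilon\to G=-\frac{1}{2\pi}\log|x|$ away from $0$, using the uniform $L^1$ bound on the right-hand side of the equation, which again needs radial estimates. Carleson--Chang's capacity argument on the annulus $r_\epsilon R\le|x|\le\delta$ would then give the upper bound
\begin{flalign*}
\limsup_\epsilon\int_{\mathbb{B}} h_\epsilon e^{4\pi(1+\epsilon)u_\epsilon^2}\le |\mathbb{B}|h_0\text{-weighted mass}+\pi e^{1+4\pi A_0},
\end{flalign*}
which with $A_0=0$ for the ball and $h_0(0)=1$ equals the total $\int_{\mathbb{B}}h_0+\pi e$. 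This contradicts the strict inequality $\sup_0>\int_{\mathbb{B}} h_0+\pi e$. That strict inequality is the Carleson--Chang test-function construction adapted to the weight $h_0$, which is continuous with $h_0(0)=1$. I expect this step to be the main obstacle: tracking the $\epsilon$-dependence of both the exponent $1+\epsilon$ and the weight uniformly in the neck region, and verifying that the adapted test functions still beat the concentration level.

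Once $c_\epsilon$ is bounded, the right-hand side of the equation is uniformly bounded in $L^\infty$; $\lambda_\epsilon$ is bounded below, and $h_\epsilon$ is uniformly bounded, which I would assume or deduce from the convergence hypothesis. Elliptic $W^{2,p}$ estimates then give $u_\epsilon\to u_0$ in $C^1(\overline{\mathbb{B}})$. Dominated convergence yields
\begin{flalign*}
\int_{\mathbb{B}} h_\epsilon e^{4\pi(1+\epsilon)u_\epsilon^2}\to\int_{\mathbb{B}} h_0e^{4\pi u_0^2},
\end{flalign*}
and $\|\nabla u_0\|_2\le1$. Combined with the liminf inequality from the first step, $u_0$ attains \eqref{1.3}; in particular $u_0\ne0$, since otherwise the value would be $\int h_0<\sup_0$.
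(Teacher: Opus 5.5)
\textbf{There is a genuine gap: the strict inequality that produces the contradiction is not proved.}

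Your architecture matches the paper's: Euler--Lagrange equation, concentration at $0$, bubble at scale $r_\epsilon$, $c_\epsilon u_\epsilon\to G$, the capacity bound $\limsup_{\epsilon\to0}\mathcal{J}_\epsilon\le\int_{\mathbb{B}}h_0\,dx+\pi e^{1+4\pi A_0}$, contradiction with a lower bound, then elliptic estimates. Some of your details are cleaner than the paper's. Your bubble $-\frac{1}{4\pi}\ln(1+\pi|x|^2)$ is $\epsilon$-independent with $\int_{\mathbb{R}^2}e^{8\pi\varphi}dx=1$. You also correctly get $A_0=0$, since $G=-\frac{1}{2\pi}\ln|x|$ on the unit ball.

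Your first justification of the strict inequality is incorrect. Carleson--Chang treat $h_0\equiv 1$, and $\pi(1+e)$ is the concentration threshold only in that case. For the weighted functional the threshold is $\int_{\mathbb{B}}h_0\,dx+\pi e\,h_0(0)$, which may lie above or below $\pi(1+e)$, so their theorem does not apply. You then state the correct target $\sup_0>\int_{\mathbb{B}}h_0\,dx+\pi e$, but only call it ``the Carleson--Chang construction adapted to $h_0$'' and flag it as the main obstacle. That is exactly the content of the paper's Section 3. There, a radial test function equals $c+c^{-1}(-\frac{1}{4\pi}\ln(1+\pi|x/\epsilon|^2)+b)$ on $\mathbb{B}_{R\epsilon}$ and $G/c$ outside. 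The constants $c,b$ are fixed by continuity at $|x|=R\epsilon$ and by unit Dirichlet energy, and the inner and outer integrals are estimated separately. Without this step there is no contradiction, so the boundedness of $c_\epsilon$ is not established.

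The adaptation is also not automatic. The only strictly positive gain over the threshold comes from $e^t\ge 1+t$ on $\mathbb{B}\setminus\mathbb{B}_{R\epsilon}$. This gives $\frac{4\pi}{c^2}\int_{\mathbb{B}}h_0G^2\,dx$, which is of order $c^{-2}\sim 2\pi/\ln(1/\epsilon)$. Every other error must therefore be shown to be $o(c^{-2})$, with a definite choice such as $R=\ln(1/\epsilon)$. The paper's Section 3 records its errors only as $O(R^{-2})$ without sign, so it cannot simply be copied. With a weight, the inner integral also loses roughly $\pi e\,(h_0(0)-\min_{\mathbb{B}_{R\epsilon}}h_0)$. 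For a merely continuous $h_0$ this loss need not be $o(1/\ln(1/\epsilon))$. You therefore need a quantitative hypothesis at the origin, for instance $h_0(x)\ge h_0(0)-o(1/\ln(1/|x|))$, which holds if $h_0$ is H\"older at $0$.

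\textbf{Secondary issue: uniformity in $\epsilon$.} Several of your steps need the hypotheses on $h_\epsilon$ to hold uniformly in $\epsilon$:
\begin{itemize}
\item the claim $h_0(0)=1$;
\item the convergence $h_\epsilon(r_\epsilon x)\,|r_\epsilon x|^{-2\epsilon}\to 1$, used in the bubble limit and in the capacity estimate;
\item the uniform bound on $h_\epsilon$, used for dominated convergence at the end.
\end{itemize}
The stated pointwise limits do not give this, so it must be assumed explicitly.
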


The outline of the proof of Theorem 1.1 is given below. Assume that $u_{\epsilon} $ are as mentioned in equation \eqref{1.3}. They are also the solutions of Euler--Lagrange equations below. If $u_{\epsilon}$ are not bounded, then there exists a constant $A_0$ satisfying
\begin{flalign*}
	\underset{u \in W_0^{1,2} \left( \mathbb{B} \right) \setminus \{0 \} \cap \mathcal{S}, ||u||_{ W_0^{1,2} \left( \mathbb{B} \right) } \leq 1 }{\sup} \int_{\mathbb{B}} h_{\epsilon}(x) e^{4 \pi u^{2} } dx  \leq  \int_{\mathbb{B}} h_0(x) dx + e^{ 1 + 4 \pi A_0  }.
\end{flalign*} 
On the other hand, we also find that there exist functions $u$ such that
\begin{flalign*}
	\underset{u \in W_0^{1,2} \left( \mathbb{B} \right) \setminus \{0 \} \cap \mathcal{S}, ||u||_{ W_0^{1,2} \left( \mathbb{B} \right) } \leq 1 }{\sup} \int_{\mathbb{B}} h_{\epsilon}(x) e^{4 \pi u^{2} } dx  >  \int_{\mathbb{B}} h_{0}(x) dx + e^{ 1 + 4 \pi A_0  }.
\end{flalign*} 
The above two inequalities yield a contradiction, which in turn implies the uniform boundedness of $u_{\epsilon}$ on $\mathbb{B}$. Then,  applying elliptic estimates, we obtain the desired result immediately.

\section{Some Lemmas}

Theorem 1.1 is proved by means of a blow-up analysis analogous to the one developed in \cite{13,14, 22, 23}, and we will divide the proof into several steps.

\subsection{The Euler--Lagrange equation of $u_{\epsilon}$}

For $\epsilon > 0$, we write 
\begin{flalign*}
	\mathcal{J} = \underset{u \in W_0^{1,2} ( \overline{\mathbb{B}} ), ~ ||u||_{W_0^{1, 2} ( \overline{\mathbb{B}}) }  \leq  1}{\sup} \int_{\mathbb{B} } h_0 (x) e^{ 4 \pi u^2 }  dx,
\end{flalign*}
and
\begin{flalign*}
	\mathcal{J_{\epsilon}} = \underset{u \in W_0^{1,2} ( \overline{\mathbb{B}} ), ~ ||u||_{W_0^{1,2} ( \overline{\mathbb{B}}) }  \leq  1}{\sup} \int_{\mathbb{B} } h_{\epsilon} (x) e^{ 4 \pi (1 + \epsilon) u^{2 } }  dx,
\end{flalign*}
From equation \eqref{1.3}, there exist some $u_{\epsilon} \in W_0^{1,2}\left(\mathbb{B} \right)$ satisfying $|| \nabla u_{\epsilon}||_{L^2 (\mathbb{B})} = 1$ and 
\begin{flalign*}
	\mathcal{J_{\epsilon}} = \int_{\mathbb{B} } h_{\epsilon}(x) e^{ 4 \pi (1 + \epsilon) u_{\epsilon}^{2 } }  dx.
\end{flalign*}

It is not difficult to find that $u_{\epsilon}$ satisfy the Euler--Lagrange equations
\begin{equation}
	\begin{cases}
		& - \Delta u_{\epsilon} = \frac{1}{\lambda_{\epsilon}} h_{\epsilon} (x) u_{\epsilon}  e^{ 4 \pi (1 + \epsilon) u_{\epsilon}^{2} }, ~ \text{in} ~ \mathbb{B};\\
		& u_{\epsilon} \geq 0, ||u_{\epsilon}||_{W_0^{1,2} (\mathbb{B})} =1;\\
		& \lambda_{\epsilon} = \int_{\mathbb{B}} h_{\epsilon} (x) u_{\epsilon}^{2}  e^{ 4 \pi (1 + \epsilon) u_{\epsilon}^{2}  }dx. \label{eq: 2.1}
	\end{cases}	
\end{equation}
Since $u_{\epsilon}$ is bounded in $W_0^{1,2} (\mathbb{B})$, there exists  a subsequence (still denoted by $\{u_{\epsilon}\}_{\epsilon}$)
\begin{equation}
	\begin{cases}
		& u_{\epsilon} \rightharpoonup u_0, ~ W_0^{1,2} (\mathbb{B});\\
		& u_{\epsilon} \rightarrow u_0, ~ L^p (\mathbb{B}), ~ p>1;\\
		&  u_{\epsilon} \rightarrow u_0, ~ \text{a.e.} ~ \mathbb{B},
	\end{cases} \label{eq: 2.2}
\end{equation}
 as $\epsilon \rightarrow 0$. By the Lebesgue dominated convergence theorem and Fotou's lemma, we have
\begin{flalign*}
	\int_{\mathbb{B}} e^{ 4 \pi u^{2}  } dx & \leq \underset{ \epsilon \rightarrow 0}{\lim \inf} \mathcal{J}_{\epsilon} \\
	& = \underset{ \epsilon \rightarrow 0}{\lim \inf} \int_{\mathbb{B}} h_{\epsilon} (x) e^{ 4 \pi (1 + \epsilon) u_{\epsilon}^{2 }  } dx. 
\end{flalign*}
Then we obtain 
\begin{flalign*}
	\mathcal{J} \leq \underset{ \epsilon \rightarrow 0}{\lim \inf} \int_{\mathbb{B}} h_{ \epsilon} (x) e^{ 4 \pi (1 + \epsilon) u_{\epsilon}^{2}  } dx. 
\end{flalign*}
Next, we prove that $\lambda_{\epsilon} > 0$ has a lower bound. Since $t e^{t} \geq e^{t  } - 1$ for $t \geq 0$, there holds 
\begin{flalign*}
	& \lambda_{\epsilon} = \int_{\mathbb{B}} h_{\epsilon}(x) u_{\epsilon}^{2}  e^{ 4 \pi (1 + \epsilon) u_{\epsilon}^{2}  }dx\\
	& \geq \frac{1}{4 \pi (1+ \epsilon)} \int_{\mathbb{B}} 4 \pi (1+ \epsilon) h_{ \epsilon} (x) u_{\epsilon}^{2}  e^{ 4 \pi (1 + \epsilon) u_{\epsilon}^{ 2 }  }dx\\
	& \geq  \frac{1}{4 \pi (1+ \epsilon)} \int_{\mathbb{B}} h_{ \epsilon} (x) \left(e^{ 4 \pi (1 + \epsilon) u_{\epsilon}^{2}  } - 1\right)dx.
\end{flalign*}
From then on, as $\epsilon \rightarrow 0$ and $e^{4 \pi (1 + \epsilon) u_{\epsilon}^2} - 1>0$, we get that 
\begin{flalign*}
	\underset{\epsilon \rightarrow 0}{\lim \inf} \lambda_{\epsilon} > 0; 
\end{flalign*}

We let $c_{\epsilon} = u_{\epsilon} (0) = \underset{\mathbb{B}}{\max}u_{\epsilon}$. There are two possibilities for the analysis of $u_{\epsilon}$, either $u_{\epsilon}$ are bounded in $\mathbb{B}$, or $\underset{\mathbb{B}}{\max}u_{\epsilon} \rightarrow + \infty$ as $\epsilon \rightarrow 0^+$. Then we describe the blow-up phenomenon of $u_{\epsilon}$.

\subsection{Blow-up Analysis}

 \begin{lemma}
 	The function $u_0 \equiv 0$ and $|\nabla u_{\epsilon}|^2 dx \rightharpoonup \delta_0$ in the sense of measure, where $\delta_0$ denotes the usual Dirac measure giving  unit mass to the point 0.
 \end{lemma}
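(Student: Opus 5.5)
This lemma is stated under the blow-up hypothesis $c_\epsilon = u_\epsilon(0)=\max_{\mathbb{B}} u_\epsilon \to +\infty$. Under that hypothesis I would argue by contradiction in two stages: first show $u_0\equiv 0$, then show the gradient energy concentrates at the origin.

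\emph{Step 1: $u_0\equiv 0$.} Suppose $u_0\not\equiv 0$. By weak lower semicontinuity and $\|\nabla u_\epsilon\|_2=1$,
\begin{flalign*}
\int_{\mathbb{B}}|\nabla (u_\epsilon-u_0)|^2dx = 1-\int_{\mathbb{B}}|\nabla u_0|^2dx+o(1) \to 1-\|\nabla u_0\|_2^2<1 .
\end{flalign*}
Using $u_\epsilon^2\le (1+\delta)(u_\epsilon-u_0)^2+(1+\delta^{-1})u_0^2$, I would choose $\delta>0$ and $p>1$ small so that
\begin{flalign*}
p(1+\epsilon)(1+\delta)\|\nabla(u_\epsilon-u_0)\|_2^2\le \theta<1
\end{flalign*}
for all small $\epsilon$. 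Since $u_0\in W_0^{1,2}$, $e^{u_0^2}\in L^q$ for every $q<\infty$, and $h_\epsilon\le C$ uniformly (they converge to $|x|^{2\epsilon}h_0$). By Hölder and the classical Moser--Trudinger inequality applied to $(u_\epsilon-u_0)/\|\nabla(u_\epsilon-u_0)\|_2$, the family $h_\epsilon e^{4\pi(1+\epsilon)u_\epsilon^2}$ is bounded in $L^{p'}(\mathbb{B})$ for some $p'>1$. Together with $\liminf\lambda_\epsilon>0$ and the $L^q$ bounds on $u_\epsilon$, the right-hand side of \eqref{eq: 2.1} is bounded in $L^s$ for some $s>1$. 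Elliptic estimates then give $u_\epsilon$ bounded in $W^{2,s}\subset C^0(\overline{\mathbb{B}})$, contradicting $c_\epsilon\to\infty$.

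\emph{Step 2: concentration.} By compactness of measures, $|\nabla u_\epsilon|^2dx\rightharpoonup\mu$ with $\mu(\overline{\mathbb{B}})\le 1$. It suffices to show $\mu=\delta_0$. Suppose instead that $\mu\ne\delta_0$. Then there exist $r>0$ and $\eta<1$ with $\mu(\overline{\mathbb{B}_r})\le \eta<1$; otherwise every ball around $0$ carries mass $1$ and $\mu=\delta_0$. Take a cutoff $\phi\in C_c^\infty(\mathbb{B}_r)$ with $\phi\equiv1$ on $\mathbb{B}_{r/2}$. Since $u_\epsilon\to0$ in $L^2$,
\begin{flalign*}
\limsup_{\epsilon\to0}\int|\nabla(\phi u_\epsilon)|^2\le\eta<1 .
\end{flalign*}
Repeating the Hölder and Moser--Trudinger argument of Step 1 on $\phi u_\epsilon$ bounds $h_\epsilon u_\epsilon e^{4\pi(1+\epsilon)u_\epsilon^2}/\lambda_\epsilon$ in $L^s(\mathbb{B}_{r/2})$ for some $s>1$. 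Elliptic estimates on $\mathbb{B}_{r/2}$ then bound $u_\epsilon$ near $0$, again contradicting $c_\epsilon\to\infty$.

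Radial symmetry helps in two places. Away from the origin, the radial lemma $u_\epsilon(x)\le C|x|^{-1/2}$ (or $\le C\sqrt{\log(1/|x|)}$) makes the right-hand side bounded on $\{|x|\ge r\}$. This is also an alternative way to see that no mass can concentrate elsewhere, since a radial concentration set would have to be a circle, which the $W^{1,2}$ radial estimates exclude.

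The main obstacle I expect is keeping the estimates uniform in $\epsilon$ near the origin, where the exponent $4\pi(1+\epsilon)$ is supercritical. There the Moser--Trudinger bound must either absorb the factor $(1+\epsilon)$ into the strict gap $\eta<1$, as above, or use the weight $h_\epsilon\sim|x|^{2\epsilon}$ via the Calanchi--Terraneo inequality \eqref{eq: 1.1}. I would try the gap argument first, since $(1+\epsilon)\eta<1$ for small $\epsilon$ makes the weight unnecessary.
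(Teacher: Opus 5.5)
Your proposal is correct and follows essentially the paper's route. Step 1 is the paper's Hölder/Moser--Trudinger argument using the gap $1-\|\nabla u_0\|_2^2$ plus elliptic estimates against $c_\epsilon\to\infty$, and Step 2 is the paper's local-gap argument near the origin, which your cutoff $\phi u_\epsilon$ makes explicit where the paper only says it is ``not difficult to see'' (your side remark that radial $W^{1,2}$ bounds alone exclude gradient concentration on a circle is not accurate, but nothing depends on it, since your contradiction argument already gives $\mu(\overline{\mathbb{B}_r})=1$ for every $r>0$).
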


\begin{proof}
	We denote that
	\begin{flalign*}
		f_{\epsilon} (x) =\frac{1}{\lambda_{\epsilon}} h_{ \epsilon} (x) e^{ 4 \pi (1 + \epsilon) u_{\epsilon}^{2} }.
	\end{flalign*}
    We can find that $\int_{\mathbb{B}} |f_{\epsilon}|^p dx \leq C < + \infty $ by Lion's concentration-compactness principle in \citep{24,25}. Or, 
    \begin{flalign*}
    	\int_{\mathbb{B}} f_{\epsilon}^p dx & = \frac{1}{\lambda_{\epsilon}^p} \int_{\mathbb{B}} h_{\epsilon}^p(x) e^{4 \pi (1 + \epsilon) p u_{\epsilon}^2 } dx\\
    	&  \leq \frac{1}{\lambda_{\epsilon}^p}\int_{\mathbb{B}} h_{\epsilon}^p(x) e^{4 \pi (1 + \epsilon) p \left( (1 + \eta) (u_{\epsilon}-u_0)^2 + (1 + \eta^{-1} ) u_0^2 \right) } dx\\
    	& \leq \frac{1}{\lambda_{\epsilon}^p} \left\lbrace \int_{\mathbb{B}} h_{\epsilon}^{p p_1}(x) e^{4 \pi (1 + \epsilon) p p_1 (1 + \eta) (u_{\epsilon}-u_0)^2  } dx \right\rbrace ^{ \frac{1}{p_1} }\\
    	& \times \left\lbrace \int_{\mathbb{B}} h_{\epsilon}^{p p_2}(x) e^{4 \pi (1 + \epsilon) p p_2 (1 + \eta^{-1} ) u_0^2  } dx\right\rbrace ^{ \frac{1}{p_2} },
    \end{flalign*}
    where $p>1$, $\eta >0$ and $ \frac{1}{p_1} + \frac{1}{p_2} =1 $. We denote that the first part is S and the other is T. Assume that $u_0 \neq 0$. Note that 
    \begin{flalign*}
    	||u_{\epsilon} - u_0||_{W_0^{1,2}}^2 & = \int_{\mathbb{B}} |\nabla u_{\epsilon}|^2 dx- 2 \int_{\mathbb{B}} |\nabla u_{\epsilon} \nabla u_0| dx + \int_{\mathbb{B}} |\nabla u_0|^2dx\\
    	&= 1- \int_{\mathbb{B}} |\nabla u_0|^2 dx + o_{\epsilon} (1).
    \end{flalign*}
    Let $p_1$, $p$ suﬃciently close to 1, and $\eta$ uﬃciently close to 0, $\epsilon >0$ suﬃciently small such that $p p_1 ||u_{\epsilon} - u_0||^2_{W_0^{1,2} } (1 + \eta) < 1-v $ for some $v>0$, then then by
    Trudinger–Moser inequality in the unit ball, we have
    \begin{flalign*}
    	& S \leq C_1,\\
    	& T \leq C_2.\\
    \end{flalign*}
    Then we have that 
    \begin{flalign*}
    	\int_{\mathbb{B}} f_{\epsilon}^p dx \leq C < +\infty.
    \end{flalign*}
    Applying elliptic estimates to equations, we obtain that $u_{\epsilon}$ is uniformly bounded on $\mathbb{B}$. This contradicts that $c_{\epsilon} \rightarrow \infty$ as $\epsilon \rightarrow 0^+$. Therefore $u_0 \equiv 0$.
        
    Next, We need to prove that $|\nabla u_{\epsilon}|^2 \rightharpoonup \delta_0$ as $\epsilon \rightarrow 0^+$.  Firstly, we have that $||u_{\epsilon}||_{W^{1,2} (\mathbb{B})} =1$. If the assertion is flase, then there would exist some $0 < r_0 <1$ such that 
        \begin{flalign*}
        	\underset{\epsilon \rightarrow 0}{\lim \sup} \int_{\mathbb{B}_{r_0}} |\nabla u_{\epsilon} |^2 dx \leq \gamma < 1.  
        \end{flalign*}
        It is not difficult to see that $\Delta u_{\epsilon}$ is bounded in $L^q (\mathbb{B}_{\frac{r_0}{2}})$ for some $q>1$. From elliptic estimates for the equation, we have that $u_{\epsilon}$ is bounded in $\mathbb{B}_{\frac{r_0}{4}}$, again contradicting $c_{\epsilon} \rightarrow + \infty$. We then obtain $|\nabla u_{\epsilon}|^2 \rightharpoonup \delta_0$ as $\epsilon \rightarrow 0$. Thus the lemma is proved.
\end{proof}

Let $r_{\epsilon} = \lambda_{\epsilon}^{\frac{1}{2}} c_{\epsilon}^{- 1 }e^{ -2 \pi (1+ \epsilon) c_{\epsilon}^{2} }$ and $\tilde{u}_{\epsilon} = \tilde{u}_{\epsilon} (x) : = u_{\epsilon} (r_{\epsilon}^{ \frac{1}{1+ \epsilon} } x)$, from \citep{1} and the classical Trudinger-Moser inequality, we know that 
\begin{flalign*}
	r_{\epsilon} e^{\alpha u_{\epsilon}^{ 2 } } \rightarrow 0
\end{flalign*} 
as $\epsilon \rightarrow 0$, $\forall \alpha < 2 \pi (1+\epsilon)$. For $x \in \mathbb{B}_{r_{\epsilon}^{-1}  }$, we define two sequences of functions by
\begin{flalign*}
	\psi_{\epsilon} (x) = c_{\epsilon}^{-1} u_{\epsilon} (r_{\epsilon}^{ \frac{1}{1+ \epsilon} } x) ,
\end{flalign*}
and 
\begin{flalign*}
	\phi_{\epsilon} (x) = c_{\epsilon}\left( u_{\epsilon} (r_{\epsilon}^{ \frac{1}{1+ \epsilon} } x) -c_{\epsilon} \right)
\end{flalign*}
for $x \in \mathbb{B}_{\epsilon} = \{ x \in \mathbb{R}^2: ~ r_{\epsilon}^{\frac{1}{1+\epsilon }} x \in \mathbb{B} \}$.
Then, we consider the asymptotic behavior of $u_{\epsilon}$ at the origin.

\begin{lemma}
	Up to a subsequence, there holds $\psi_{\epsilon} \rightarrow 1$ in $C^1_{loc} (\mathbb{R}^2)$ and $\phi_{\epsilon} \rightarrow \phi$ in $C^1_{loc} (\mathbb{R}^2)$ as $\epsilon \rightarrow 0$, where 
	\begin{flalign*}
		\phi(x) = - \frac{1}{4 \pi (1 + \epsilon) } \ln \left(1+ \frac{\pi}{1 + \epsilon} |x|^{2( 1+ \epsilon)} \right). 
	\end{flalign*}
\end{lemma}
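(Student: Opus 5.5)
We carry out the standard blow-up rescaling argument, using the scaling $x \mapsto r_\epsilon^{1/(1+\epsilon)}x$ to absorb the conical weight. Since $h_\epsilon(y)\sim |y|^{2\epsilon}$ near the origin, we write $h_\epsilon(y)=|y|^{2\epsilon}g_\epsilon(y)$ with $g_\epsilon(y)\to 1$ as $y\to0$. We also use that $r_\epsilon\to0$, which follows from the remark preceding the lemma together with Lemma 2.1.

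\textbf{Step 1 (equation for $\psi_\epsilon$).} Substituting $y=r_\epsilon^{1/(1+\epsilon)}x$ into \eqref{eq: 2.1} gives
\begin{flalign*}
-\Delta \psi_\epsilon(x) = c_\epsilon^{-1} r_\epsilon^{\frac{2}{1+\epsilon}} \lambda_\epsilon^{-1} r_\epsilon^{\frac{2\epsilon}{1+\epsilon}}|x|^{2\epsilon} g_\epsilon(r_\epsilon^{\frac{1}{1+\epsilon}}x)\, u_\epsilon\, e^{4\pi(1+\epsilon)u_\epsilon^2}.
\end{flalign*}
The factor $r_\epsilon^{2/(1+\epsilon)}\cdot r_\epsilon^{2\epsilon/(1+\epsilon)}$ equals $r_\epsilon^2$. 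By the definition of $r_\epsilon$, $r_\epsilon^2=\lambda_\epsilon c_\epsilon^{-2}e^{-4\pi(1+\epsilon)c_\epsilon^2}$, so
\begin{flalign*}
-\Delta\psi_\epsilon = c_\epsilon^{-2}|x|^{2\epsilon}g_\epsilon(\cdot)\,\psi_\epsilon\, e^{4\pi(1+\epsilon)(u_\epsilon^2-c_\epsilon^2)}.
\end{flalign*}
Since $0\le u_\epsilon\le c_\epsilon$ and $c_\epsilon\to\infty$, the right-hand side tends to $0$ uniformly on compact sets. We have $0\le\psi_\epsilon\le1$ and $\psi_\epsilon(0)=1$. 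By elliptic estimates, $\psi_\epsilon\to\psi$ in $C^1_{loc}(\mathbb{R}^2)$, where $\psi$ is harmonic and bounded. Liouville's theorem (or simply radial symmetry together with boundedness) then gives $\psi\equiv1$.

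\textbf{Step 2 (equation for $\phi_\epsilon$).} In the same way,
\begin{flalign*}
-\Delta\phi_\epsilon = |x|^{2\epsilon}g_\epsilon(r_\epsilon^{\frac{1}{1+\epsilon}}x)\,\psi_\epsilon\, e^{4\pi(1+\epsilon)(u_\epsilon^2-c_\epsilon^2)}.
\end{flalign*}
The exponent can be rewritten as $4\pi(1+\epsilon)(u_\epsilon^2-c_\epsilon^2)=4\pi(1+\epsilon)\phi_\epsilon(1+\psi_\epsilon)$. Because $\phi_\epsilon\le0$, the right-hand side is bounded by $|x|^{2\epsilon}\sup g_\epsilon$, which is locally bounded. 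Moreover $\phi_\epsilon(0)=0$, and $\phi_\epsilon$ is radial and decreasing. Integrating the ODE $-(r\phi_\epsilon')'=r\cdot(\text{bounded})$ from $0$ gives local $C^1$ bounds, so standard elliptic estimates yield $\phi_\epsilon\to\phi$ in $C^1_{loc}(\mathbb{R}^2)$ along a subsequence. Using $\psi_\epsilon\to1$ and $g_\epsilon\to1$ locally uniformly, $\phi$ solves
\begin{flalign*}
-\Delta\phi = |x|^{2\epsilon}e^{8\pi(1+\epsilon)\phi}, \qquad \phi(0)=0=\max\phi,
\end{flalign*}
where the $\epsilon$ in the statement is read as the limiting parameter, in the same convention the paper uses for the displayed $\phi$. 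We also need the finite-mass bound
\begin{flalign*}
\int_{\mathbb{R}^2}|x|^{2\epsilon}e^{8\pi(1+\epsilon)\phi}\,dx\le1.
\end{flalign*}
By Fatou's lemma this follows from
\begin{flalign*}
\int_{\mathbb{B}_R}|x|^{2\epsilon}g_\epsilon\psi_\epsilon^2e^{\cdots}\,dx=\lambda_\epsilon^{-1}\int_{\mathbb{B}_{Rr_\epsilon^{1/(1+\epsilon)}}}h_\epsilon u_\epsilon^2e^{4\pi(1+\epsilon)u_\epsilon^2}\,dy\le1.
\end{flalign*}

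\textbf{Step 3 (identifying $\phi$).} This step is the main obstacle: we must classify radial solutions of the singular Liouville equation with finite mass. We work with the radial ODE directly. Set $t=\ln r$ and $w(t)=8\pi(1+\epsilon)\phi+2(1+\epsilon)t$. Then $w''=-e^{w}$, which is explicitly integrable. The regularity of $\phi$ at the origin fixes the behavior of $w$ as $t\to-\infty$, namely $w'\to2(1+\epsilon)$, and the condition $\phi(0)=0$ fixes the remaining constant. Solving gives
\begin{flalign*}
\phi(x)=-\frac{1}{4\pi(1+\epsilon)}\ln\Bigl(1+\frac{\pi}{1+\epsilon}|x|^{2(1+\epsilon)}\Bigr),
\end{flalign*}
which is the function in the statement. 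Alternatively, one can invoke the classification of Prajapat--Tarantello for $-\Delta v=|x|^{2\alpha}e^{v}$ with finite mass. Uniqueness of the limit then shows that the convergence holds for the whole chosen subsequence.
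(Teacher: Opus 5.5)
Your proposal follows the paper's route: rescale by $r_\epsilon^{1/(1+\epsilon)}$, bound the right-hand sides, use elliptic estimates and Liouville for $\psi_\epsilon$, then pass to the limiting Liouville-type equation for $\phi$. It also supplies what the paper omits, namely the local $C^1$ bound for $\phi_\epsilon$ from the radial ODE and the explicit classification of $\phi$, which the paper only asserts in Remark 2.1.

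Two small fixes are needed. First, with your $w$ the equation is $w''=-8\pi(1+\epsilon)e^{w}$, not $w''=-e^{w}$; this constant is what produces the coefficient $\frac{\pi}{1+\epsilon}$ once $\phi(0)=0$ is imposed. Second, passing from ``$g_\epsilon(y)\to1$ as $y\to0$'' to $g_\epsilon(r_\epsilon^{1/(1+\epsilon)}x)\to1$ locally uniformly, with $\sup g_\epsilon$ bounded, requires that limit to hold uniformly in $\epsilon$. The paper makes this assumption tacitly too, but the hypotheses as stated do not imply it.
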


\begin{proof}
	Direct calculation shows
    \begin{flalign*}
    	- \Delta \psi_{\epsilon} (x) = c_{\epsilon}^{-2}  r_{\epsilon}^{ \frac{-2 \epsilon}{1 + \epsilon} } h_{\epsilon} \left( r_{\epsilon}^{ \frac{1}{1 + \epsilon} } x \right) \psi_{\epsilon}  e^{ 4 \pi (1 + \epsilon) \left( \tilde{u}_{\epsilon}^2- c_{\epsilon}^2 \right) } 
    \end{flalign*}
     We also have
    \begin{flalign*}
    	 - \Delta \phi_{\epsilon} (x) 
    	=  r_{\epsilon}^{ \frac{-2 \epsilon}{1 + \epsilon} } h_{\epsilon} \left( r_{\epsilon}^{ \frac{1}{1 + \epsilon} } x \right) \psi_{\epsilon}  e^{ 4 \pi (1 + \epsilon) \left( \tilde{u}_{\epsilon}^2- c_{\epsilon}^2 \right) } .
    \end{flalign*}
     We note that $| \psi_{\epsilon} |\leq 1$, $r_{\epsilon} \rightarrow 0$, $c_{\epsilon} \rightarrow + \infty$ as $\epsilon \rightarrow 0$. Then we have $\Delta \psi_{\epsilon} \in L^p (\mathbb{B}_{\epsilon} )$ for some $p>1$. From elliptic estimates in \cite{9}, we get
     \begin{flalign*}
     	\psi_{\epsilon} \rightarrow \psi_0, ~ \text{in} ~ C^1_{loc} (\mathbb{R}^2) ~ \text{as} ~ \epsilon \rightarrow 0,
     \end{flalign*}
     where
     \begin{flalign*}
     	\psi_0 (0) = 1, ~ 0 \leq \psi_0 (x) \leq 1, ~ \forall x \in \mathbb{R}^N.
     \end{flalign*}
     Also, The function $\psi_0$ satisfies $- \Delta \psi_{0}  =0 $ in $\mathbb{R}^2$. From the Liouville theorem, we deduce that  $\psi_0 (0) \equiv \psi_0 (x) = 1$.
     
     Also we have by applying elliptic estimates in \cite{4} that,
     \begin{flalign*}
     	\phi_{\epsilon} \rightarrow \phi,~ \text{in} ~ C^1_{loc} (\mathbb{R}^2) ~ \text{as} ~ \epsilon \rightarrow 0,
     \end{flalign*}
     where $\phi$ satisﬁes
      \begin{equation*}
     	\begin{cases}
     		& - \Delta \phi = |x|^{2 \epsilon}e^{ 8 \pi (1 + \epsilon)\phi }, ~ \text{in} ~ \mathbb{R}^N;\\
     		& \underset{\mathbb{R}^2}{\sup} \phi = \phi(0) = 0.
     	\end{cases}
     \end{equation*}
     Thus the results hold.
\end{proof}

\begin{remark} \label{rem: 2.1}
    This lemma means that the weak function $\phi$ satisfies
    \begin{equation*}
    	\begin{cases}
    		& - \Delta \phi  = |x|^{2 \epsilon} e^{ 8 \pi (1 + \epsilon) \phi }, ~ \text{in} ~ \mathbb{R}^2;\\
    		& \underset{\mathbb{R}^2}{\sup} \phi = \phi(0) = 0.
    	\end{cases}
    \end{equation*}
     Moreover, $\phi(x) = - \frac{1}{4 \pi (1 + \epsilon) } \ln \left(1+ \frac{\pi}{1 + \epsilon} |x|^{2( 1+ \epsilon)} \right)$ and, 
     \begin{flalign}
     	\int_{\mathbb{R}^2} |x|^{ 2 \epsilon}e^{ 8 \pi (1 + \epsilon) \phi } dx = 1. \label{eq: 2.3}
     \end{flalign}
\end{remark}

We proceed to analyze the convergence of $u_{\epsilon} $ away from zero. As in \cite{12}, we set $u_{\epsilon, \iota} = \min \{ \iota c_{\epsilon}, ~ u_{\epsilon} \}$ for any $\iota \in (0,1) $.
 
\begin{lemma}
	For any $ 0 < \iota <1$, we have 
	\begin{equation*}
		\underset{\epsilon \rightarrow 0}{\lim} \int_{\mathbb{B}} |\nabla u_{\epsilon, \iota} |^2 dx = \iota.
	\end{equation*}
\end{lemma}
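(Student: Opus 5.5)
Following the truncation argument of Li (as in \cite{12}), we prove two one-sided bounds,
\begin{flalign*}
\liminf_{\epsilon \rightarrow 0} \int_{\mathbb{B}} |\nabla u_{\epsilon,\iota}|^2 dx \geq \iota, \qquad \liminf_{\epsilon \rightarrow 0} \int_{\mathbb{B}} |\nabla (u_\epsilon - \iota c_\epsilon)^+|^2 dx \geq 1-\iota .
\end{flalign*}
Their sum is then compared with the identity
\begin{flalign*}
\int_{\mathbb{B}} |\nabla u_{\epsilon,\iota}|^2 dx + \int_{\mathbb{B}} |\nabla (u_\epsilon-\iota c_\epsilon)^+|^2 dx = \int_{\mathbb{B}}|\nabla u_\epsilon|^2 dx = 1 .
\end{flalign*}
This identity holds because $\nabla u_{\epsilon,\iota}$ and $\nabla(u_\epsilon-\iota c_\epsilon)^+$ have disjoint supports, up to a null set. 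Together the bounds force both liminfs to be limits, equal to $\iota$ and $1-\iota$ respectively.

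For the second bound, test the Euler--Lagrange equation \eqref{eq: 2.1} against $(u_\epsilon-\iota c_\epsilon)^+ \in W_0^{1,2}(\mathbb{B})$. This gives
\begin{flalign*}
\int_{\mathbb{B}} |\nabla (u_\epsilon-\iota c_\epsilon)^+|^2 dx = \frac{1}{\lambda_\epsilon}\int_{\mathbb{B}} h_\epsilon u_\epsilon (u_\epsilon-\iota c_\epsilon)^+ e^{4\pi(1+\epsilon)u_\epsilon^2} dx \geq \frac{1}{\lambda_\epsilon}\int_{\mathbb{B}_{R r_\epsilon^{1/(1+\epsilon)}}} h_\epsilon u_\epsilon (u_\epsilon-\iota c_\epsilon) e^{4\pi(1+\epsilon)u_\epsilon^2} dx
\end{flalign*}
for any fixed $R>0$, since the integrand is nonnegative. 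On this small ball, Lemma 2.2 gives $u_\epsilon = c_\epsilon \psi_\epsilon$ with $\psi_\epsilon\to 1$ uniformly. Hence $u_\epsilon(u_\epsilon-\iota c_\epsilon) = c_\epsilon^2(1-\iota+o(1))$.

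Next change variables $x = r_\epsilon^{1/(1+\epsilon)} y$. The definition of $r_\epsilon$ gives $\lambda_\epsilon^{-1}c_\epsilon^2 r_\epsilon^{2/(1+\epsilon)} e^{4\pi(1+\epsilon)c_\epsilon^2}\cdot r_\epsilon^{-2\epsilon/(1+\epsilon)} = 1$. The remaining weight is $r_\epsilon^{-2\epsilon/(1+\epsilon)}h_\epsilon(r_\epsilon^{1/(1+\epsilon)}y)$, which tends to $|y|^{2\epsilon}$ by the hypothesis $h_\epsilon(x)|x|^{-2\epsilon}\to 1$ as $x\to 0$. The exponent satisfies $4\pi(1+\epsilon)(u_\epsilon^2-c_\epsilon^2) = 4\pi(1+\epsilon)\phi_\epsilon(\psi_\epsilon+1) \to 8\pi(1+\epsilon)\phi$ locally uniformly. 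The right-hand side therefore tends to $(1-\iota)\int_{\mathbb{B}_R}|y|^{2\epsilon}e^{8\pi(1+\epsilon)\phi}dy$. Letting $R\to\infty$ and using \eqref{eq: 2.3}, we get $\liminf \geq 1-\iota$. The first bound is obtained in the same way by testing with $u_{\epsilon,\iota}$: on the same ball $u_{\epsilon,\iota} = \iota c_\epsilon$ for small $\epsilon$, so the integrand becomes $\iota c_\epsilon^2(1+o(1))$ times the same density, which yields $\iota$ in the limit.

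The main obstacle is the limit of the weight $r_\epsilon^{-2\epsilon/(1+\epsilon)}h_\epsilon(r_\epsilon^{1/(1+\epsilon)}y)$, where $\epsilon$ varies simultaneously with the blow-up scale. We need $h_\epsilon(x)|x|^{-2\epsilon}\to1$ uniformly in $\epsilon$ as $x\to0$. I would take this as the intended reading of the hypotheses, the same one implicitly used in Lemma 2.2 and Remark \ref{rem: 2.1}. Since $|y|^{2\epsilon}\to 1$ on compact sets, only the total mass $1$ from \eqref{eq: 2.3} matters. Once this uniformity is granted, the rest is routine bookkeeping with Fatou's lemma and the $C^1_{loc}$ convergence from Lemma 2.2.
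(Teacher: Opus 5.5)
Your proposal is correct and is essentially the paper's own argument. Like the paper, you test the Euler--Lagrange equation with $u_{\epsilon,\iota}$ and with $(u_\epsilon-\iota c_\epsilon)^+$, keep only the ball $\mathbb{B}_{R r_\epsilon^{1/(1+\epsilon)}}$, and rescale using Lemma 2.2 and the unit mass \eqref{eq: 2.3} to get the lower bounds $\iota$ and $1-\iota$. You then close with the splitting identity $\int|\nabla u_{\epsilon,\iota}|^2+\int|\nabla(u_\epsilon-\iota c_\epsilon)^+|^2=1$. Your remark that $h_\epsilon(x)|x|^{-2\epsilon}\to 1$ must hold uniformly in $\epsilon$ is exactly what the paper uses tacitly here and in Lemma 2.2.

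There is one small slip. The scaling identity should read $\lambda_\epsilon^{-1}c_\epsilon^2 r_\epsilon^{2/(1+\epsilon)}e^{4\pi(1+\epsilon)c_\epsilon^2} = r_\epsilon^{-2\epsilon/(1+\epsilon)}$, so the compensating factor is $r_\epsilon^{+2\epsilon/(1+\epsilon)}$. Your stated remaining weight $r_\epsilon^{-2\epsilon/(1+\epsilon)}h_\epsilon(r_\epsilon^{1/(1+\epsilon)}y)$ already reflects the correct version, so the argument is unaffected.
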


\begin{proof}
	By the divergence theorem, we have
     \begin{flalign*}
     	\int_{\mathbb{B}} |\nabla u_{\epsilon, \iota} |^2 dx & = -\int_{\mathbb{B}} u_{\epsilon, \iota} \Delta u_{\epsilon}  dx\\
     	& \geq \frac{1}{\lambda_{\epsilon}} \int_{\mathbb{B}_{ R_{\epsilon}^{ \frac{1}{ 1+ \epsilon } } } (0) }  h_{ \epsilon} (x) u_{\epsilon} \cdot u_{\epsilon,\iota}  e^{ 4 \pi (1 + \epsilon) u_{\epsilon}^2 } dx\\
     	& = \iota ( 1 + o_{\epsilon} (1))  \int_{\mathbb{B}_R (0)} h_{ \epsilon} (y) e^{  4 \pi ( 1+ \epsilon ) \left(u_{\epsilon} (y)^2 -  c_{\epsilon}^2 \right) } dy + o_{\epsilon}(1).
     \end{flalign*}
     Testing the equation \eqref{eq: 2.1} by $(u_{\epsilon} - \iota c_{\epsilon})^+$, and we get that 
     \begin{flalign*}
     	\int_{\mathbb{B}} |\nabla (u_{\epsilon} - \iota c_{\epsilon})^+ |^2 dx & = -\int_{\mathbb{B}} (u_{\epsilon} - \iota c_{\epsilon})^+ \Delta (u_{\epsilon} - \iota c_{\epsilon})^+  dx\\
     	& \geq \frac{1}{\lambda_{\epsilon}} \int_{\mathbb{B}_{ R_{\epsilon}^{ \frac{1}{ 1+ \epsilon } } } (0) }  h_{ \epsilon} (x) u_{\epsilon} \cdot ( u_{\epsilon} - \iota c_{\epsilon} )^+  e^{ 4 \pi (1 + \epsilon) u_{\epsilon}^2 } dx\\
     	& = (1 - \iota) ( 1 + o_{\epsilon} (1))  \int_{\mathbb{B}_R (0)} h_{ \epsilon} (y) e^{  4 \pi ( 1+ \epsilon ) \left(\tilde{u}_{\epsilon}^2 - c_{\epsilon}^2 \right) }  dy + o_{\epsilon}(1).
     \end{flalign*}
     In fact, let $t_{\epsilon}$ lie between $u_{\epsilon}(r_{\epsilon}^{ \frac{1}{1+ \epsilon} } x )$ and $c_{\epsilon}$. By the mean value theorem, we have that
     \begin{flalign*}
     	\tilde{u}_{\epsilon}^2 - c_{\epsilon}^2 = 2 t_{\epsilon} c_{\epsilon}^{-1} \phi_{\epsilon} (x) .
      \end{flalign*}
     From Fatou's Lemma and equation (2.3), we have
     \begin{flalign}
     	& \underset{\epsilon \rightarrow 0}{\lim \inf} \int_{\mathbb{B}} |\nabla u_{\epsilon, \iota}|^2 dx \geq \iota ; \label{eq: 2.4} \\
     	& \underset{\epsilon \rightarrow 0}{\lim \inf} \int_{\mathbb{B}} |\nabla (u_{\epsilon} - \iota c_{\epsilon} )|^2 dx \geq 1 - \iota . \label{eq: 2.5}
     \end{flalign}
     We also note that 
     \begin{flalign}
     	\int_{\mathbb{B}} |\nabla u_{\epsilon, \iota}|^2 dx + \int_{\mathbb{B}} | \nabla (u_{\epsilon} - \iota c_{\epsilon})^+ |^2 dx = \int_{\mathbb{B}} |\nabla u_{\epsilon}|^2 dx = 1+ o_{\epsilon} (1). \label{eq: 2.6}
     \end{flalign} 
     Combining \eqref{eq: 2.4}, \eqref{eq: 2.5} and \eqref{eq: 2.6}, we have that the results hold.
\end{proof}

\begin{lemma}
	There holds 
	\begin{flalign*}
		\underset{\epsilon \rightarrow 0}{\lim \sup}\int_{\mathbb{B}} h_{\epsilon} (x) e^{ 4 \pi (1 + \epsilon) u_{\epsilon}^2 } dx \leq \int_{\mathbb{B}} h_{0} (x) dx + \underset{R \rightarrow + \infty}{\lim} \underset{\epsilon \rightarrow 0}{\lim \sup} \int_{ \mathbb{B}_{R_{ r_{\epsilon}^{ \frac{1}{1+\epsilon} } }} } h_{\epsilon} (x) e^{ 4 \pi (1 + \epsilon) u_{\epsilon}^2 } dx.
	\end{flalign*}
\end{lemma}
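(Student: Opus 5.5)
The plan is to split the ball at the level set $\{u_\epsilon < \iota c_\epsilon\}$ and use Lemma 2.3 to control the part where $u_\epsilon$ is small. Concretely, for fixed $0<\iota<1$ I write
\begin{flalign*}
\int_{\mathbb{B}} h_\epsilon e^{4\pi(1+\epsilon)u_\epsilon^2}dx = \int_{\{u_\epsilon<\iota c_\epsilon\}} h_\epsilon e^{4\pi(1+\epsilon)u_\epsilon^2}dx + \int_{\{u_\epsilon\ge \iota c_\epsilon\}} h_\epsilon e^{4\pi(1+\epsilon)u_\epsilon^2}dx =: I_\epsilon + II_\epsilon .
\end{flalign*}

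\emph{The term $I_\epsilon$.} On the set $\{u_\epsilon<\iota c_\epsilon\}$ we have $u_\epsilon=u_{\epsilon,\iota}$. By Lemma 2.3, $\|\nabla u_{\epsilon,\iota}\|_2^2\to\iota<1$, so for some $q>1$ and all small $\epsilon$,
\[
q(1+\epsilon)\|\nabla u_{\epsilon,\iota}\|_2^2<1 .
\]
The classical Moser--Trudinger inequality then shows that $e^{4\pi(1+\epsilon)u_{\epsilon,\iota}^2}$ is bounded in $L^q(\mathbb{B})$. We also know $h_\epsilon$ is uniformly bounded (it is continuous and converges to $h_0$, so I assume the convergence is uniform). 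Since $u_{\epsilon,\iota}\to0$ a.e. by Lemma 2.1, Vitali's theorem gives
\[
\int_{\mathbb{B}} h_\epsilon e^{4\pi(1+\epsilon)u_{\epsilon,\iota}^2}dx\to\int_{\mathbb{B}} h_0\,dx,
\]
and hence $\limsup I_\epsilon\le \int_{\mathbb{B}} h_0\,dx$.

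\emph{The term $II_\epsilon$.} On $\{u_\epsilon\ge\iota c_\epsilon\}$ we have $1\le u_\epsilon^2/(\iota^2c_\epsilon^2)$, so
\[
II_\epsilon\le \frac{1}{\iota^2c_\epsilon^2}\int_{\mathbb{B}} h_\epsilon u_\epsilon^2 e^{4\pi(1+\epsilon)u_\epsilon^2}dx=\frac{\lambda_\epsilon}{\iota^2c_\epsilon^2}.
\]
Letting $\iota\to1$, it remains to show that $\limsup_{\epsilon\to 0}\lambda_\epsilon/c_\epsilon^2$ is bounded by the concentration term
\[
\lim_{R\to\infty}\limsup_{\epsilon\to0}\int_{\mathbb{B}_{Rr_\epsilon^{1/(1+\epsilon)}}}h_\epsilon e^{4\pi(1+\epsilon)u_\epsilon^2}dx .
\]

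\emph{Comparing $\lambda_\epsilon/c_\epsilon^2$ with the concentration term.} Change variables $x=r_\epsilon^{1/(1+\epsilon)}y$ and use the definition of $r_\epsilon$. The bubble integral becomes
\[
\frac{\lambda_\epsilon}{c_\epsilon^2}\int_{\mathbb{B}_R} r_\epsilon^{-2\epsilon/(1+\epsilon)}h_\epsilon\!\left(r_\epsilon^{1/(1+\epsilon)}y\right)e^{4\pi(1+\epsilon)(\tilde u_\epsilon^2-c_\epsilon^2)}dy .
\]
By Lemma 2.2 together with $\lim_{x\to0} h_\epsilon|x|^{-2\epsilon}=1$, the integrand converges to $|y|^{2\epsilon}e^{8\pi(1+\epsilon)\phi}$. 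By (2.3), its integral over $\mathbb{B}_R$ tends to $1$ as $R\to\infty$. Hence the concentration term equals $\limsup\lambda_\epsilon/c_\epsilon^2$, which gives $II_\epsilon$ bounded by it in the limit. Combining with the bound on $I_\epsilon$ yields the lemma.

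\emph{Expected main obstacle.} The key step is this last identification. It needs the rescaled convergence to be uniform on $\mathbb{B}_R$, including the factor $h_\epsilon(r_\epsilon^{1/(1+\epsilon)}y)\,r_\epsilon^{-2\epsilon/(1+\epsilon)}\to|y|^{2\epsilon}$, jointly as $\epsilon\to0$. I expect this to require a uniformity assumption in the hypothesis $\lim_{x\to0}h_\epsilon(x)|x|^{-2\epsilon}=1$ with respect to $\epsilon$. I would impose that uniformity explicitly.
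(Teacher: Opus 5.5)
Your proposal is correct and follows essentially the paper's proof: split at the level $\iota c_\epsilon$, bound the upper part by $\lambda_\epsilon/(\iota^2c_\epsilon^2)$, let $\iota\to1$, and compare $\limsup_{\epsilon\to0}\lambda_\epsilon/c_\epsilon^2$ with the bubble integral via the rescaling $x=r_\epsilon^{1/(1+\epsilon)}y$ and (2.3); your Vitali argument (Lemma 2.3 plus Moser--Trudinger) and the $\epsilon$-uniform version of $h_\epsilon(x)|x|^{-2\epsilon}\to1$ supply justifications the paper leaves out. One small correction: assume only that $h_\epsilon$ is uniformly bounded and converges to $h_0$ a.e., not uniformly, because uniform convergence combined with your uniformity near $0$ would force the continuous limit $h_0$ to have both $h_0(0)=0$ and $h_0(x)\to1$ as $x\to0$, so the two assumptions are incompatible (already the model case $h_\epsilon=|x|^{2\epsilon}$ does not converge uniformly), whereas boundedness and a.e.\ convergence are all your Vitali step actually uses.
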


\begin{proof}
	For any $0<\iota<1$, we have
	\begin{flalign*}
		\int_{\mathbb{B}} h_{\epsilon} (x) e^{ 4 \pi ( 1+ \epsilon) u_{\epsilon}^2  } = \int_{ u_{\epsilon} \leq \iota c_{\epsilon} } h_{\epsilon} (x) e^{ 4 \pi ( 1+ \epsilon) u_{\epsilon}^2}   dx + \int_{ u_{\epsilon} > \iota c_{\epsilon} } h_{\epsilon} (x) e^{ 4 \pi ( 1+ \epsilon) u_{\epsilon}^2  } dx.
	\end{flalign*}
    The first lemma implies that $u_{\epsilon,\iota}$ converges to $0$ a.e. in $\mathbb{B}$, from which we obtain
     \begin{flalign*}
     	\int_{ u_{\epsilon} \leq \iota c_{\epsilon} } h_{\epsilon} (x) e^{ 4 \pi ( 1+ \epsilon) u_{\epsilon}^2  } dx \leq \int_{ \mathbb{B} } h_{\epsilon} (x) e^{ 4 \pi ( 1+ \epsilon) u_{\epsilon, \iota}^2  } dx = \int_{\mathbb{B}} h_{\epsilon} (x) dx  + o_{ \epsilon } (1).
     \end{flalign*}
     In additional, we estimate
     \begin{flalign*}
     	\int_{ u_{\epsilon} > \iota c_{\epsilon} } h_{\epsilon} (x) e^{ 4 \pi ( 1+ \epsilon) u_{\epsilon}^2} dx & \leq \frac{1}{\iota^2 c_{\epsilon}^2 } \int_{ u_{\epsilon} > \iota c_{\epsilon} } h_{\epsilon} (x) u_{\epsilon}^2 e^{ 4 \pi ( 1+ \epsilon) u_{\epsilon}^2 } dx\\
     	& \leq \frac{1}{\iota^2  c_{\epsilon}^2}  \int_{ \mathbb{B} } h_{\epsilon} (x) u_{\epsilon}^{2} e^{ 4 \pi ( 1+ \epsilon) u_{\epsilon}^2  } dx\\
     	& = \frac{ \lambda_{\epsilon} }{ \iota^2 c_{\epsilon}^2 }.
     \end{flalign*}
      Letting $\iota \rightarrow 1$, then we have
      \begin{flalign}
      	\underset{ \epsilon \rightarrow 0}{ \lim \sup } \int_{\mathbb{B}} h_{\epsilon} (x) e^{ 4 \pi (1+ \epsilon) u_{\epsilon}^2 } dx \leq \int_{\mathbb{B}} h_{0} (x) dx + \underset{\epsilon \rightarrow 0}{\lim \sup} \frac{\lambda_{\epsilon}}{  c_{\epsilon}^2 }. \label{eq: 2.7}
      \end{flalign}
      On the other hand,  for any fixed $R > 0$, we obtain
      \begin{flalign*}
      	\int_{\mathbb{B}_{R r_{\epsilon}^{ \frac{1}{1+ \epsilon} } } } h_{\epsilon } (x) e^{ 4 \pi (1+ \epsilon) u_{\epsilon}^2 } dx & = \int_{\mathbb{B}_R } h_{\epsilon } (y) r_{\epsilon}^2 e^{ 4 \pi (1+ \epsilon) \tilde{u}_{\epsilon}^2 } dy\\
      	& = \lambda_{\epsilon} c_{\epsilon}^{- 2 } \cdot \int_{\mathbb{B}_R} |x|^{2 \epsilon} e^{ 8 \pi \phi } dx+o_{\epsilon}(1).
      \end{flalign*}
      Thus,
      \begin{flalign}
      	& \underset{R \rightarrow + \infty}{\lim } \underset{\epsilon \rightarrow 0}{\lim \sup} \int_{\mathbb{B}_{R r_{\epsilon}^{ \frac{1}{1+ \epsilon} } } } h_{\epsilon } (x) e^{ 4 \pi (1+ \epsilon) u_{\epsilon}^2 } dx  \notag\\
      	& = \underset{R \rightarrow + \infty}{\lim } \underset{\epsilon \rightarrow 0}{\lim \sup} \lambda_{\epsilon} c_{\epsilon}^{- 2 } \cdot \int_{\mathbb{B}_R} |x|^{2 \epsilon} e^{ 8 \pi \phi } dx \notag \\
      	& \geq \underset{\epsilon \rightarrow 0}{\lim \sup} \frac{\lambda_{\epsilon}}{  c_{\epsilon}^{2} }. \label{eq: 2.8}
      \end{flalign}
      The lemma follows from equations \eqref{eq: 2.7} and \eqref{eq: 2.8}.
\end{proof}

\begin{lemma}
	For any $\alpha < 2$, we have that
	\begin{flalign*}
		\underset{\epsilon \rightarrow 0}{\lim}  \frac{\lambda_{\epsilon}}{c_{\epsilon}^{\alpha}} = + \infty.
	\end{flalign*}
\end{lemma}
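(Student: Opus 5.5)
The plan is to use Lemma 2.3, together with the weak convergence $u_{\epsilon}\rightharpoonup 0$ from Lemma 2.1, to bound $\lambda_{\epsilon}$ from below by an exponential in $c_{\epsilon}^2$. The exponential will dominate any power $c_{\epsilon}^{\alpha}$.

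\emph{Step 1: split the integral.} Fix $0<\iota<1$ and restrict the integral defining $\lambda_{\epsilon}$ to the region $\{u_{\epsilon}\le \iota c_{\epsilon}\}$, where $u_{\epsilon}=u_{\epsilon,\iota}$. This gives
\begin{flalign*}
\lambda_{\epsilon}\ \ge\ \int_{\{u_{\epsilon}\le \iota c_{\epsilon}\}} h_{\epsilon} u_{\epsilon}^2 e^{4\pi(1+\epsilon)u_{\epsilon}^2}dx .
\end{flalign*}
This alone is not enough, so instead I would work with the set $\{u_{\epsilon}>\iota c_{\epsilon}\}$. On that set $u_{\epsilon}^2 e^{4\pi(1+\epsilon)u_{\epsilon}^2}\ge \iota^2c_{\epsilon}^2e^{4\pi\iota^2c_{\epsilon}^2}$, and hence
\begin{flalign*}
\lambda_{\epsilon}\ \ge\ \iota^2 c_{\epsilon}^2 e^{4\pi \iota^2 c_{\epsilon}^2}\int_{\{u_{\epsilon}>\iota c_{\epsilon}\}} h_{\epsilon}\,dx .
\end{flalign*}

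\emph{Step 2: bound the measure of the superlevel set from below.} Since the functions are radial and decreasing, $\{u_{\epsilon}>\iota c_{\epsilon}\}=\mathbb{B}_{\rho_{\epsilon}}$ is a ball. By Lemma 2.3, $\int_{\mathbb{B}}|\nabla (u_{\epsilon}-\iota c_{\epsilon})^+|^2dx\to 1-\iota$, so this Dirichlet energy is concentrated on $\mathbb{B}_{\rho_{\epsilon}}$, while the oscillation of $u_{\epsilon}$ there is only $(1-\iota)c_{\epsilon}$. For a radial function $v$ on $\mathbb{B}_{\rho}$ vanishing on the boundary, the capacity bound $\int|\nabla v|^2\ge 2\pi\, v(0)^2/\ln(1/\rho)$ holds in the minimizing sense. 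This is not the right direction: it gives an upper bound on $\ln(1/\rho_\epsilon)$ only if $v(0)$ is large.

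So I would instead compare with the blow-up profile. By Lemma 2.2, on $\mathbb{B}_{R r_{\epsilon}^{1/(1+\epsilon)}}$ we have $u_{\epsilon}\ge c_{\epsilon}-C_R/c_{\epsilon}>\iota c_{\epsilon}$. Hence $\rho_{\epsilon}\ge R\, r_{\epsilon}^{1/(1+\epsilon)}$, which is circular because it again involves $\lambda_{\epsilon}$.

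\emph{Step 3: the capacity estimate, which is the main obstacle.} The simplest non-circular route is the truncation energy. By Lemma 2.3, $\int|\nabla u_{\epsilon,\iota}|^2\to\iota$, and $u_{\epsilon,\iota}$ equals $\iota c_{\epsilon}$ on $\mathbb{B}_{\rho_{\epsilon}}$ and vanishes on $\partial\mathbb{B}$. The harmonic (capacity) minimization gives
\begin{flalign*}
\iota+o(1)\ \ge\ \frac{2\pi\,\iota^2 c_{\epsilon}^2}{\ln(1/\rho_{\epsilon})},
\end{flalign*}
so $\ln(1/\rho_{\epsilon})\ge 2\pi\iota c_{\epsilon}^2(1+o(1))$. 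This bound again goes the wrong way for a lower bound on $\rho_\epsilon$.

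The correct non-circular ingredient is the other piece of Lemma 2.3: $(u_{\epsilon}-\iota c_{\epsilon})^+$ has energy about $1-\iota$ on $\mathbb{B}_{\rho_{\epsilon}}$. Apply the classical Moser--Trudinger inequality on $\mathbb{B}_{\rho_{\epsilon}}$ to $w=(u_{\epsilon}-\iota c_{\epsilon})^+/\sqrt{1-\iota+o(1)}$, using the standard fact (Carleson--Chang) that the Moser--Trudinger supremum over $\mathbb{B}_\rho$ scales like $\rho^2$. Since $u_{\epsilon}^2\le (1+\delta)(u_{\epsilon}-\iota c_\epsilon)^2+(1+\delta^{-1})\iota^2c_{\epsilon}^2$, this gives
\begin{flalign*}
\lambda_{\epsilon}\le \int_{\mathbb{B}_{\rho_\epsilon}}h_\epsilon u_\epsilon^2 e^{4\pi(1+\epsilon)u_\epsilon^2}+O(\text{lower region})\lesssim c_\epsilon^2\,\rho_\epsilon^2 e^{C\iota c_\epsilon^2}.
\end{flalign*}
Combining this with the expression for $r_\epsilon$ bounds $\rho_\epsilon$ from below in terms of $\lambda_\epsilon$.

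\emph{Step 4: the argument I would actually write.} I would argue by contradiction. Suppose $\lambda_{\epsilon}\le Cc_{\epsilon}^{\alpha}$ with $\alpha<2$. Then
\begin{flalign*}
r_{\epsilon}^2=\lambda_\epsilon c_\epsilon^{-2}e^{-4\pi(1+\epsilon)c_\epsilon^2}\le Cc_{\epsilon}^{\alpha-2}e^{-4\pi(1+\epsilon)c_{\epsilon}^2}.
\end{flalign*}
Lemma 2.4 and \eqref{eq: 2.3} give
\begin{flalign*}
\lim_{R\to\infty}\limsup_{\epsilon\to0}\int_{\mathbb{B}_{Rr_\epsilon^{1/(1+\epsilon)}}}h_\epsilon e^{4\pi(1+\epsilon)u_\epsilon^2}=\limsup_{\epsilon\to 0}\lambda_\epsilon c_\epsilon^{-2}=0,
\end{flalign*}
because $c_{\epsilon}^{\alpha-2}\to0$. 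Then Lemma 2.4 yields $\lim\mathcal{J}_{\epsilon}\le\int_{\mathbb{B}}h_0\,dx$. On the other hand $\mathcal{J}_\epsilon\ge\int h_\epsilon e^{4\pi(1+\epsilon)v^2}$ for any fixed admissible radial $v\not\equiv0$, and $\liminf$ of the right-hand side is at least $\int h_0e^{4\pi v^2}>\int h_0$ by Fatou. This is a contradiction.

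The main obstacle is justifying that the concentration term in Lemma 2.4 equals exactly $\limsup\lambda_\epsilon c_\epsilon^{-2}$ times the bubble mass, i.e.\ that $h_\epsilon(r_\epsilon^{1/(1+\epsilon)}y)\,r_\epsilon^{-2\epsilon/(1+\epsilon)}\to|y|^{2\epsilon}$ uniformly on $\mathbb{B}_R$. This should follow from $\lim_{x\to0}h_\epsilon|x|^{-2\epsilon}=1$ (made uniform in $\epsilon$), and from the convergence $\phi_\epsilon\to\phi$ of Lemma 2.2.
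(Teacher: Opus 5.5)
Your Step 4 is essentially the paper's proof. If the claim fails, then along a subsequence $\lambda_\epsilon c_\epsilon^{-2}\le Cc_\epsilon^{\alpha-2}\to0$, hence $\limsup_{\epsilon\to0}\mathcal{J}_\epsilon\le\int_{\mathbb{B}}h_0\,dx$, which contradicts $\liminf_{\epsilon\to0}\mathcal{J}_\epsilon\ge\int_{\mathbb{B}}h_0e^{4\pi v^2}dx>\int_{\mathbb{B}}h_0\,dx$. The only difference is that the paper invokes \eqref{eq: 2.7} directly: it comes from the level-set splitting alone and already bounds the limit by $\int_{\mathbb{B}}h_0\,dx+\limsup_{\epsilon\to0}\lambda_\epsilon c_\epsilon^{-2}$, so the ``main obstacle'' you flag about the rescaled $h_\epsilon$ never arises.

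You should drop the opening plan and Steps 1--3 entirely. An exponential lower bound on $\lambda_\epsilon$ is actually false here: \eqref{eq: 2.8} together with $\limsup_{\epsilon\to0}\mathcal{J}_\epsilon<\infty$ gives $\limsup_{\epsilon\to0}\lambda_\epsilon c_\epsilon^{-2}<\infty$.
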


\begin{proof}
	If not, then, $\frac{\lambda_{\epsilon}}{c_{\epsilon}^{2}} \rightarrow 0$ as $\epsilon \rightarrow 0$. For any $u \in W_0^{1,2} ( \mathbb{B})$ with $|| u||_{ W_0^{1,2} (\mathbb{B}) } = 1$. According to the equations \eqref{eq: 2.1} and \eqref{eq: 2.7}, we have
	\begin{flalign*}
		\int_{\mathbb{B}} h_{\epsilon} (x) dx  \leq \int_{\mathbb{B}} h_{\epsilon}(x) e^{ 4 \pi u^{2}  } dx \leq \mathcal{J } & \leq \underset{\epsilon \rightarrow 0}{\lim \inf} \int_{\mathbb{B}} h_{\epsilon} (x) e^{ 4 \pi (1 + \epsilon) u_{\epsilon}^2  } dx\\
		& \leq \int_{\mathbb{B}} h_{\epsilon} (x) dx  + \underset{\epsilon \rightarrow 0}{\lim \sup} \frac{\lambda_{\epsilon}}{c_{\epsilon}^{ 2}} =\int_{\mathbb{B}} h_{\epsilon} (x) dx .
	\end{flalign*}
    It is impossible since the left hand integral is stractly greater than $\int_{\mathbb{B}} h_{\epsilon} (x) dx$. This result is proved.
\end{proof}

We shall now discuss the convergence of $c_{\epsilon} u_{\epsilon}$ under the assumption $c_{\epsilon} \rightarrow + \infty$.

\begin{lemma} \label{lem: 2.6}
	The sequence satisfies the following relations
	\begin{flalign}
		& c_{\epsilon} u_{\epsilon} \rightharpoonup G_0, ~ W_0^{1, q} (\mathbb{B}), ~ 1< q < 2, \notag \\
		&c_{\epsilon} u_{\epsilon} \rightarrow G_0, ~ L^p (\mathbb{B}), ~ 1 < p< \frac{2q}{2-q}, \notag \\
		&c_{\epsilon } u_{\epsilon} \rightarrow G_0,~ C_{loc}^1 (\mathbb{B} \setminus \{0\}). \label{eq: 2.9}
	\end{flalign}
    Here $G_0$ is a Green's function and satisfies $- \Delta  G_0 = \delta_0$ in the distributional sense, where $\delta_0$ is the usual Dirac measure centered at $0$.
\end{lemma}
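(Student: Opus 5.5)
The plan is the standard Struwe/Li argument. Write the equation for $c_\epsilon u_\epsilon$:
\begin{flalign*}
-\Delta (c_\epsilon u_\epsilon) = f_\epsilon := \frac{c_\epsilon}{\lambda_\epsilon} h_\epsilon(x) u_\epsilon e^{4\pi(1+\epsilon)u_\epsilon^2} \quad \text{in } \mathbb{B}.
\end{flalign*}

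\textbf{Step 1: $f_\epsilon\,dx \rightharpoonup \delta_0$ as measures.} This is the main step. Fix $\iota\in(0,1)$ and split $\mathbb{B}$ into $\{u_\epsilon>\iota c_\epsilon\}$ and $\{u_\epsilon\le \iota c_\epsilon\}$.

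On $\{u_\epsilon\le\iota c_\epsilon\}$ the integrand is controlled by
\begin{flalign*}
\frac{c_\epsilon}{\lambda_\epsilon} \int_{\mathbb{B}} h_\epsilon\, u_{\epsilon,\iota}\, e^{4\pi(1+\epsilon)u_{\epsilon,\iota}^2}\,dx .
\end{flalign*}
By the lemma giving $\lim_{\epsilon\to0}\int_{\mathbb{B}}|\nabla u_{\epsilon,\iota}|^2dx=\iota<1$, together with H\"older and the Moser--Trudinger inequality (or \eqref{eq: 1.1}), $e^{4\pi(1+\epsilon)u_{\epsilon,\iota}^2}$ is bounded in $L^s$ for some $s>1$. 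Since $u_{\epsilon,\iota}\to 0$ in $L^p$, this integral is $O(c_\epsilon/\lambda_\epsilon)\cdot o_\epsilon(1)$. The lemma $\lambda_\epsilon/c_\epsilon^{\alpha}\to\infty$ for every $\alpha<2$, in particular $\alpha=1$, then makes it $o_\epsilon(1)$.

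On $\{u_\epsilon>\iota c_\epsilon\}$ we need to compare with the bubble. Inside $\mathbb{B}_{Rr_\epsilon^{1/(1+\epsilon)}}$, rescale using Lemma 2.2 ($\psi_\epsilon\to1$, $\phi_\epsilon\to\phi$). The change of variables gives
\begin{flalign*}
\int_{\mathbb{B}_{Rr_\epsilon^{1/(1+\epsilon)}}} f_\epsilon\,dx \;\to\; \int_{\mathbb{B}_R}|x|^{2\epsilon}e^{8\pi(1+\epsilon)\phi}\,dx \;\to\; 1
\end{flalign*}
as $R\to\infty$, by \eqref{eq: 2.3}.

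For the remainder of $\{u_\epsilon>\iota c_\epsilon\}\setminus \mathbb{B}_{Rr_\epsilon^{1/(1+\epsilon)}}$, use $u_\epsilon\le \iota^{-1}c_\epsilon^{-1}u_\epsilon^2$ there. The remainder is then bounded by
\begin{flalign*}
\iota^{-1}\left(1-\int_{\mathbb{B}_{Rr_\epsilon^{1/(1+\epsilon)}}} \frac{h_\epsilon u_\epsilon^2}{\lambda_\epsilon}e^{4\pi(1+\epsilon)u_\epsilon^2}dx\right),
\end{flalign*}
and the inner integral likewise tends to $1$ after rescaling (using $u_\epsilon/c_\epsilon\to1$ on the bubble). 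So the remainder is small.

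Combining the pieces, $\int_{\mathbb{B}} f_\epsilon\,dx\to1$ and the mass concentrates at $0$. For any $\varphi\in C(\overline{\mathbb{B}})$, $\int_{\mathbb{B}} f_\epsilon\varphi\,dx\to\varphi(0)$, using continuity of $\varphi$ on the shrinking ball. The delicate points are tracking $r_\epsilon^{1/(1+\epsilon)}$ scaling factors and $h_\epsilon(r_\epsilon^{1/(1+\epsilon)}x)\,r_\epsilon^{-2\epsilon/(1+\epsilon)}\to|x|^{2\epsilon}$-type limits, which follow from $\lim_{x\to0}h_\epsilon|x|^{-2\epsilon}=1$.

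\textbf{Step 2: $W^{1,q}$ bounds.} Since $f_\epsilon$ is bounded in $L^1$ and $c_\epsilon u_\epsilon=0$ on $\partial\mathbb{B}$, Struwe's estimate (duality with $W^{1,q'}_0\subset L^\infty$ for $q'>2$) gives $\|\nabla(c_\epsilon u_\epsilon)\|_{L^q}\le C_q$ for all $1<q<2$. Weak compactness gives $c_\epsilon u_\epsilon\rightharpoonup G_0$ in $W_0^{1,q}$, and Rellich gives strong convergence in $L^p$ for $p<2q/(2-q)$. Passing to the limit in the weak formulation using Step 1 yields $-\Delta G_0=\delta_0$.

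\textbf{Step 3: $C^1_{loc}$ convergence away from the origin.} On a compact set $K\subset\overline{\mathbb{B}}\setminus\{0\}$ (interior suffices), Lemma 2.1 gives $\int_{\mathbb{B}\setminus \mathbb{B}_{\delta}}|\nabla u_\epsilon|^2dx\to0$. Hence $e^{4\pi(1+\epsilon)u_\epsilon^2}$ is bounded in $L^s$ for every $s$ on $\mathbb{B}\setminus\mathbb{B}_{2\delta}$; alternatively, radial monotonicity gives $u_\epsilon$ bounded there, since $u_\epsilon\to0$ in $L^2$ and $u_\epsilon$ is radially decreasing. Then $f_\epsilon\to0$ in $L^s$ there, because $c_\epsilon/\lambda_\epsilon\to0$. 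Elliptic $W^{2,s}$ estimates plus Sobolev embedding give $C^{1}$ bounds, and Arzel\`a--Ascoli gives $c_\epsilon u_\epsilon\to G_0$ in $C^1_{loc}(\mathbb{B}\setminus\{0\})$.
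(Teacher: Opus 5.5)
Your proposal is correct and follows the paper's proof essentially step for step: the split at level $\iota c_\epsilon$, the low part killed by $c_\epsilon/\lambda_\epsilon\to 0$, unit mass from the bubble via \eqref{eq: 2.3}, the rest of $\{u_\epsilon>\iota c_\epsilon\}$ bounded by $\iota^{-1}\bigl(1-\int_{\mathbb{B}_R}|x|^{2\epsilon}e^{8\pi(1+\epsilon)\phi}dx\bigr)+o_\epsilon(1)$, and then the $L^1$ bound with Struwe-type and elliptic estimates. Your handling of the low part through $u_{\epsilon,\iota}$ and $\int_{\mathbb{B}}|\nabla u_{\epsilon,\iota}|^2dx\to\iota$ is cleaner than the paper's, which applies H\"older to $\int_{\mathbb{B}}h_\epsilon u_\epsilon e^{4\pi(1+\epsilon)u_\epsilon^2}dx$ over the whole ball; that quantity is at least of order $\lambda_\epsilon/c_\epsilon\to\infty$, so the paper's bound by a constant does not hold as written.

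One point is justified neither by you nor by the paper. Getting the expected limit of the rescaled weight $r_\epsilon^{-2\epsilon/(1+\epsilon)}h_\epsilon(r_\epsilon^{1/(1+\epsilon)}x)$ on $\mathbb{B}_R$ requires $h_\epsilon(x)|x|^{-2\epsilon}\to 1$ uniformly in $\epsilon$, together with $\sup_\epsilon\|h_\epsilon\|_{L^\infty}<\infty$. The pointwise-in-$\epsilon$ hypothesis does not give this by itself.
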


\begin{proof}
	Let 
	\begin{flalign*}
		g_{\epsilon} (x) = \lambda_{\epsilon}^{-1} h_{\epsilon} (x) c_{\epsilon} u_{\epsilon} e^{ 4 \pi ( 1 + \epsilon) u_{\epsilon}^2 }.
	\end{flalign*}
    For any $\varphi \in C_0^{\infty} ( \mathbb{B} )$, we have 
    \begin{flalign*}
    	\underset{ \epsilon \rightarrow 0}{\lim } \int_{ \mathbb{B}} g_{\epsilon} (x) \varphi(x) dx = \varphi(0). 
    \end{flalign*}
    We divide the integral into two parts, i.e,
    \begin{flalign*}
    	\int_{\mathbb{B}} g_{\epsilon} (x) \varphi (x) dx = \int_{u_{\epsilon } \leq \iota c_{\epsilon} } g_{\epsilon} (x) \varphi (x) dx + \int_{u_{\epsilon } > \iota c_{\epsilon}} g_{\epsilon} (x) \varphi (x) dx.
    \end{flalign*}
    We estimate the two integrals on the above equation. For the first part, we obtain
    \begin{flalign*}
    	\int_{ u_{\epsilon} \leq \iota c_{\epsilon} } g_{\epsilon} (x) \varphi (x) dx & = \int_{ u_{\epsilon} \leq \iota c_{\epsilon} } \lambda_{\epsilon}^{-1} h_{\epsilon} (x) c_{\epsilon} u_{\epsilon} e^{ 4 \pi ( 1 + \epsilon) u_{\epsilon}^2 } \varphi(x) dx\\
    	& \leq \frac{ c_{\epsilon} }{ \lambda_{\epsilon} } \left(\underset{\mathbb{B}}{\sup} \varphi(x) \right) \int_{ \mathbb{B} } h_{\epsilon} (x) u_{\epsilon} e^{ 4 \pi ( 1 + \epsilon) u_{\epsilon}^2 } dx.
    \end{flalign*}
    Note that $u_{\epsilon} \rightarrow u_0$ strongly in $L^p (\mathbb{B})$ for $p>1$, then by H\"{o}lder inequality, we have
    \begin{flalign*}
    	& \int_{ \mathbb{B} } h_{\epsilon} (x) u_{\epsilon} e^{ 4 \pi ( 1 + \epsilon) u_{\epsilon}^2 } dx\\
    	& \leq \left( \int_{ \mathbb{B} } h_{\epsilon} (x) e^{ \alpha_{\epsilon} p ( 1 + \epsilon) u_{\epsilon}^2 } dx \right)^{ \frac{1}{p} } \left( \int_{ \mathbb{B} } h_{\epsilon} (x) |u_{\epsilon}|^q   dx \right)^{ \frac{1}{q} } \leq C,
    \end{flalign*}
    where $\frac{1}{p} + \frac{1}{q} =1$. According to the lemma above, we have
    \begin{flalign*}
    	\int_{ u_{\epsilon} \leq \iota c_{\epsilon} } g_{\epsilon} (x) \varphi (x) dx = o_{\epsilon} (1).
    \end{flalign*}
    By the lemma \ref{lem: 2.6}, it follows that $\mathbb{B}_{R_{ \epsilon }^{ \frac{1}{1+ \epsilon} } } \subset \{ u_{\epsilon} > \iota \epsilon \}$ for $\epsilon$ small enough. Then we can implies 
    \begin{flalign*}
    	\int_{ \mathbb{B}_{R_{ \epsilon }^{ \frac{1}{1+ \epsilon} } } \cap \{ u_{\epsilon} > \iota \epsilon \} } g_{\epsilon} (x) \varphi (x) dx & = \int_{ \mathbb{B}_{R_{ \epsilon }^{ \frac{1}{1+ \epsilon} } } } g_{\epsilon} (x) \varphi (x) dx\\
    	& = \int_{ \mathbb{B}_{R_{ \epsilon }^{ \frac{1}{1+ \epsilon} } } } \lambda_{\epsilon}^{-1} h_{\epsilon} (x) c_{\epsilon} u_{\epsilon} e^{ 4 \pi ( 1 + \epsilon) u_{\epsilon}^2 } \varphi (x) dx\\
    	& = c_{\epsilon}^{-1} 
    	\int_{\mathbb{B}_R } h_{\epsilon } (y) u_{\epsilon} (y) e^{ 4 \pi (1+ \epsilon) \left( u_{\epsilon}(y)^2 - c_{\epsilon}^2 \right) }  \varphi (y)  dy + o_{\epsilon}(1).
    \end{flalign*}
    From the remark \ref{rem: 2.1}, we have that
    \begin{flalign*}
    	\int_{ \mathbb{B}_{R_{ \epsilon }^{ \frac{1}{1+ \epsilon} } } \cap \{ u_{\epsilon} > \iota \epsilon \} } g_{\epsilon} (x) \varphi (x) dx & = \varphi(0) \left( 1 + o_{\epsilon} (1) \right) \left( 1 + o_{R} (1) \right) \\
    	& =  \varphi(0) \left( 1 + o_{\epsilon} (1) + o_{R} (1) \right).
    \end{flalign*}
    On the other hand, we obtain
    \begin{flalign*}
    	\int_{\{ u_{\epsilon} > \iota c_{\epsilon} \}\setminus \mathbb{B}_{ R_{ r_{\epsilon}^{ \frac{1}{1 + \epsilon} } } }  }  g_{\epsilon} (x) \varphi(x) dx &  \leq \frac{1}{\iota}
    	\left( \underset{\mathbb{B}}{\sup} |\varphi(x)| \right) \lambda_{\epsilon}^{-1} \cdot\\
    	& \int_{\{ u_{\epsilon} > \iota c_{\epsilon} \}\setminus \mathbb{B}_{ R_{ r_{\epsilon}^{ \frac{1}{1 + \epsilon} } } }  } h_{\epsilon}(x) c_{\epsilon} u_{\epsilon} e^{ 4 \pi ( 1 + \epsilon) u_{\epsilon}^2 } dx \\
    	& \leq \frac{1}{\iota}
    	\left( \underset{\mathbb{B}}{\sup} |\varphi(x)| \right) \left( 1- \int_{\mathbb{B}_R} |x|^{2 \epsilon} e^{ 8 \pi  \phi }dx + o_{\epsilon} (1)    \right)\\
    	& = o_{\epsilon} (1) + o_R (1).
    \end{flalign*}  
    We have
    \begin{flalign*}
    	\underset{\epsilon \rightarrow 0}{\lim} \int_{u_{\epsilon} > \iota c_{\epsilon}} g_{\epsilon} (x) \varphi (x) dx = \varphi(0),
    \end{flalign*}
    by $R \rightarrow + \infty$.
    Then $\varphi(x)$ is the Dirac function. Multiplying both sides of the equation \eqref{eq: 2.1} by $c_{\epsilon}$, we have 
    \begin{flalign*}
    	- \Delta c_{\epsilon} u_{\epsilon} = \frac{1}{\lambda_{\epsilon}} h_{\epsilon} (x)c_{\epsilon} u_{\epsilon}  e^{ 4 \pi (1 + \epsilon) u_{\epsilon}^2 }, ~ \text{in} ~ \mathbb{B}.
    \end{flalign*}
    Then we conclude that $g_{\epsilon} (x)$ is bounded in $L^1_{loc}(\mathbb{B})$. 
    Applying the results above, similar methods of proof in \cite{10}, and the Sobolev embedding
    theorem, we obtain a subsequence $\{ u_{\epsilon} \}_{\epsilon}$ satisfying 
    \begin{equation*}
    	\begin{cases}
    		& c_{\epsilon} u_{\epsilon} \rightharpoonup G_0, ~ \text{in}
    		~ W_0^{1,q}(\mathbb{B}), ~ \forall 1 < q < 2;		\\
    		&c_{\epsilon}
    		 u_{\epsilon} \rightarrow G_0, ~ \text{in}
    		~ L^{s}(\mathbb{B}), ~ \forall 1 \leq s \leq \frac{2q}{2-q};	\\
    		&c_{\epsilon}
    	    u_{\epsilon} \rightarrow G_0, ~ \text{in}
    		~ C^1_{loc} \left(\overline{\mathbb{B}} \setminus \{0\} \right),
    	\end{cases}
    \end{equation*}
    where $G_0$ is a weak solution of 
    \begin{equation*}
    	- \Delta G_0 = \delta_0 ~ \text{in} ~ \mathbb{B}.
    \end{equation*}

    We proceed to analyze the convergence of $u_{\epsilon} $ away from zero. Moreover,  $G_0$ takes the form 
    \begin{flalign*}
    	G_0 = - \frac{1}{ 2 \pi } \ln |x| + A_0 + \beta (x)
    \end{flalign*}
     by \cite{11}, where $A_0$ is a constant, $\beta (x) \in C^{1, \gamma} (\mathbb(\overline{B}) )$ for $0 < \gamma <1$, $\gamma(0) = 0 $.
\end{proof}
    
\begin{lemma}
     Let $\zeta_{\epsilon} (x) \in W_0^{1,2} ( \mathbb{B} )$ with $\int_{\mathbb{B} } |\zeta_{\epsilon} (x)|^2 dx \leq 1$, $ \zeta_{\epsilon} (x) \rightharpoonup 0$ weakly in $W_0^{1,2} ( \mathbb{B} )$ as $\epsilon \rightarrow 0$. $ \zeta_{\epsilon} (x) $ is nonnegative and radially symmetric. Then we have 
     \begin{flalign*}
     	\underset{\epsilon \rightarrow 0}{ \lim \sup } \int_{\mathbb{B}} h_{\epsilon} (x) \left( e^{ 4 \pi (1 + \epsilon) \zeta_{\epsilon}^2 } -1 \right) dx \leq \pi e.
     \end{flalign*}
\end{lemma}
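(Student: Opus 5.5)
This is a Carleson--Chang type estimate, and I will reduce it to the radial one-dimensional inequality. The hypothesis is evidently meant as $\int_{\mathbb{B}}|\nabla\zeta_\epsilon|^2dx\le 1$, which is the only reading under which the statement makes sense. Throughout I use $h_\epsilon(x)\le C|x|^{2\epsilon}$ near $0$ and $h_\epsilon\to h_0$ uniformly; both follow from the hypotheses of Theorem 1.1 and the continuity of $h_\epsilon$ on $\overline{\mathbb{B}}$.

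\emph{Step 1: change of variables.} Since $\zeta_\epsilon$ is radial and nonnegative, write $|x|=e^{-t/2}$ and
\[
w_\epsilon(t)=\sqrt{4\pi}\,\zeta_\epsilon(e^{-t/2}),\qquad t\in[0,\infty).
\]
Then $w_\epsilon(0)=0$ and $\int_0^\infty |w_\epsilon'|^2dt=\int_{\mathbb{B}}|\nabla\zeta_\epsilon|^2dx\le1$. Moreover $dx=\pi e^{-t}dt$ and $|x|^{2\epsilon}=e^{-\epsilon t}$. Hence, up to factors $h_\epsilon(x)|x|^{-2\epsilon}$ which are bounded and tend to $1$ uniformly as $t\to\infty$, the functional becomes
\[
\pi\int_0^\infty \Big(e^{(1+\epsilon)w_\epsilon^2-(1+\epsilon)t}-e^{-(1+\epsilon)t}\Big)\,dt .
\]

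\emph{Step 2: split the region.} Take $T$ large and split $[0,\infty)$ into $[0,T]$ and $[T,\infty)$. On $[0,T]$, i.e.\ away from the origin, weak convergence $\zeta_\epsilon\rightharpoonup0$ plus compactness of radial functions away from $0$ (via $|w_\epsilon(t)|\le\sqrt t$ and Strauss-type estimates) gives $\zeta_\epsilon\to0$ uniformly on $\{|x|\ge e^{-T/2}\}$. So that part of the integral tends to $0$.

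\emph{Step 3: the tail.} On $[T,\infty)$ the factor $h_\epsilon|x|^{-2\epsilon}$ equals $1+o_T(1)$, and the term with $-1$ is $O(e^{-T})$. It remains to show
\[
\limsup_{\epsilon\to0}\int_T^\infty e^{(1+\epsilon)(w_\epsilon^2-t)}\,dt\le e .
\]
I would use Carleson--Chang's one-dimensional lemma: if $w(0)=0$, $\int_0^\infty|w'|^2\le1$ and, say, $\int_0^K|w'|^2\to0$ for each fixed $K$ (which is what weak convergence to $0$ gives), then $\limsup\int_0^\infty e^{w^2-t}dt\le 1+e$. Integrating over $[T,\infty)$ only, and discarding the part where $w$ stays small, the concentration part contributes at most $e$.

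\emph{Main obstacle: the factor $(1+\epsilon)$.} The exponent is $(1+\epsilon)(w^2-t)$, not $w^2-t$. Since $w^2\le t$ by Cauchy--Schwarz, we have $e^{(1+\epsilon)(w^2-t)}\le e^{w^2-t}$. So the factor $(1+\epsilon)$ only helps, and the weight $e^{-\epsilon t}$ is absorbed into this same comparison. The genuine work is therefore re-running the Carleson--Chang argument for a sequence rather than a single function. I would redo their proof: take the first point $t_\epsilon$ where $w_\epsilon^2\ge t-2\ln(t)$-type thresholds are met, and use $w^2\le \delta\,t+(t-a)/(1-\delta)$-style splitting estimates with $\delta=\int_0^a|w'|^2\to0$. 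This yields the bound $e^{1+o(1)}$ for the tail integral. Multiplying by $\pi$ then gives $\pi e$.
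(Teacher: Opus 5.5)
Your argument is correct in substance, but it reaches Carleson--Chang by a different reduction than the paper.

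The paper removes the weight and the factor $1+\epsilon$ by an exact substitution. The function $\rho_\epsilon(r)=(1+\epsilon)^{1/2}\zeta_\epsilon(r^{1/(1+\epsilon)})$ has the same Dirichlet energy as $\zeta_\epsilon$. For $h_\epsilon=|x|^{2\epsilon}$ this gives the identity $\int_{\mathbb{B}}|x|^{2\epsilon}(e^{4\pi(1+\epsilon)\zeta_\epsilon^2}-1)\,dx=\frac{1}{1+\epsilon}\int_{\mathbb{B}}(e^{4\pi\rho_\epsilon^2}-1)\,dx$, and Carleson--Chang is then applied to $\rho_\epsilon$.

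You instead use the radial bound $w^2\le t$, i.e. $4\pi\zeta_\epsilon^2\le\ln|x|^{-2}$. This gives the pointwise domination $|x|^{2\epsilon}e^{4\pi(1+\epsilon)\zeta_\epsilon^2}\le e^{4\pi\zeta_\epsilon^2}$, so the left side above is at most $\int_{\mathbb{B}}(e^{4\pi\zeta_\epsilon^2}-1)\,dx+\pi\epsilon/(1+\epsilon)$. Your reduction loses something where the paper's is exact, but it needs no auxiliary function and no check that $\rho_\epsilon\rightharpoonup0$. It also makes your last paragraph unnecessary. The domination holds on all of $[0,\infty)$, so Carleson--Chang applies directly to the sequence $\zeta_\epsilon$ itself. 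Nothing has to be re-run, and the split at $T$ is needed only to handle the factor $h_\epsilon|x|^{-2\epsilon}$.

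\textbf{First correction: weak convergence does not give concentration.} Weak convergence does \emph{not} give $\int_0^K|w_\epsilon'|^2\,dt\to0$. For example, the radial oscillations $a k^{-1}\sin(2k\pi|x|)$ on $\{1/2\le|x|\le1\}$, extended by zero, tend weakly to $0$ while their energy stays on that annulus. You can fix this in either of two ways:
\begin{itemize}
\item quote Carleson--Chang in its weak-convergence form ($\zeta_\epsilon\rightharpoonup0$ and $\|\nabla\zeta_\epsilon\|_2\le1$ imply $\limsup\int_{\mathbb{B}}(e^{4\pi\zeta_\epsilon^2}-1)\,dx\le\pi e$), which is how the paper uses it; or
\item insert a dichotomy. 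If $\int_0^K|w_\epsilon'|^2\to\gamma>0$ for some $K$, the tail carries energy at most $1-\gamma+o(1)$. Moser's inequality then gives an $L^p$ bound with $p>1$, and Vitali yields $\int_K^\infty e^{w_\epsilon^2-t}\,dt\to e^{-K}$. Otherwise the sequence concentrates and the concentration form applies.
\end{itemize}
A smaller point: for fixed $T$ the tail bound is $e+e^{-T}$, not $e$. The extra $e^{-T}$ cancels against the $-1$ term, so this is harmless.

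\textbf{Second correction: the hypotheses on $h_\epsilon$.} Your assumptions on $h_\epsilon$ do not follow from the stated hypotheses, which are pointwise in $\epsilon$. In fact, uniform convergence $h_\epsilon\to h_0$ on $\overline{\mathbb{B}}$ fails even in the model case $h_\epsilon=|x|^{2\epsilon}$, $h_0\equiv1$, because $h_\epsilon(0)=0$. What you actually use is a uniform bound on $h_\epsilon$ together with $\limsup_{\epsilon\to0}\sup_{\mathbb{B}_r}h_\epsilon|x|^{-2\epsilon}\to1$ as $r\to0$. This must be added as a hypothesis, because without it the lemma is false. A bump of large fixed height in $h_\epsilon$, placed at the concentration scale of a Moser sequence, breaks the bound $\pi e$. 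The paper's computation tacitly makes the same assumption by treating $h_\epsilon$ as $|x|^{2\epsilon}$.
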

 
\begin{proof}
     By the radial symmetry of $\zeta_{\epsilon} (x)$, we have $ \zeta_{\epsilon} (x) = \zeta_{\epsilon} (r)$ with $r=|x|$, and we make the change of variables,
     \begin{flalign*}
     	\rho_{\epsilon} (r) = (1 + \epsilon)^{ \frac{1}{2} } \zeta_{\epsilon} \left( r^{ \frac{1}{ 1 + \epsilon} } \right).
     \end{flalign*}
    A straightforward calculation shows that
     \begin{flalign*}
     	\int_{\mathbb{B}} | \nabla \rho_{\epsilon} (x)|^2 dx & = 2 \pi \int_0^1 | \nabla (1 + \epsilon)^{ \frac{1}{2} } \zeta_{\epsilon} \left( s \right)|^2 rdr\\
     	& = 2 \pi \int_0^1 \zeta_{\epsilon}^{ \prime } (r) ^2 r dr\\
     	& = \int_{\mathbb{B}} | \nabla \zeta_{\epsilon} \left( x \right)|^2 dx.
     \end{flalign*}
     Then above estimate implies that $|| \rho_{\epsilon} ||_{ W_0^{1, 2}  \left( \mathbb{B} \right) }=   || \zeta_{\epsilon} ||_{ W_0^{1, 2}  \left( \mathbb{B} \right) } \leq 1$. Passing to a subsequence, we may assume that 
     \begin{equation*}
     	\begin{cases}
     		& \rho_{\epsilon} \rightharpoonup \rho_* , ~ W_0^{1,2} \left( \mathbb{B} \right);\\
     		& \rho_{\epsilon} \rightarrow \rho_* , ~ L^p \left( \mathbb{B}  \right), ~ \forall p > 0;\\
     		& \rho_{\epsilon} \rightarrow \rho_* , ~ \text{a.e.} ~ \mathbb{B}.
     	\end{cases}
     \end{equation*}
     It is obvious to see $\zeta_{\epsilon} \rightarrow 0$ a.e. in $\mathbb{B}$, and then it follows that $ \rho_* \rightarrow 0$ a.e. in $\mathbb{B}$. Furthermore, by a change of variable $r = \mu^{ \frac{1}{1 + \epsilon} }$, it holds that
     \begin{flalign*}
     	\int_{ \mathbb{B}} h_{\epsilon } (x) \left\lbrace e^{ 4 \pi \left( 1 + \epsilon \right) \zeta^2_{ \epsilon } (x)  } - 1 \right\rbrace dx  & = 2 \pi \int_0^1 h_{\epsilon} (r) r \left\lbrace e^{ 4 \pi \left( 1 + \epsilon \right) \zeta^2_{ \epsilon }
        (r)  } - 1 \right\rbrace dr\\
         & \leq \frac{1}{1 + \epsilon} \int_{\mathbb{B}} h_{\epsilon} (x) \left\lbrace e^{ 4 \pi \rho_{\epsilon}^2 (x)   }-1 \right\rbrace dx\\
         & \leq \frac{1}{1+ \epsilon} \pi e ,
     \end{flalign*}
     where $A_0$ is contained in the Green function. It follows from that a result of Yang's reference \cite{10} that
     \begin{flalign*}
     	\underset{\epsilon \rightarrow 0}{\lim \sup} \int_{ \mathbb{B}} h_{\epsilon}(x) e^{4 \pi  u_{\epsilon}^2} dx \leq  \int_{ \mathbb{B}} h_{0} (x) dx + \pi e.
     \end{flalign*}
     Also we have 
     \begin{flalign*}
     	\underset{\epsilon \rightarrow 0}{\lim \sup} \int_{ \mathbb{B}} h_{\epsilon } (x) \left\lbrace e^{ 4 \pi \left( 1 + \epsilon \right) \zeta^2_{ \epsilon } (x)  } - 1 \right\rbrace dx  \leq \pi e .
     \end{flalign*}
\end{proof}

\begin{lemma} \label{lem: 2.8}
	There holds 
	\begin{flalign*}
		\underset{u \in W_0^{1,2} (\mathbb{B}), ||u||_{ W_0^{1,2} ( \mathbb{B} ) } \leq 1 }{\sup} \int_{ \mathbb{B}} h_{\epsilon} (x) e^{ 4 \pi u^2}  dx \leq \int_{\mathbb{B}} h_{0} (x) dx +  \pi e^{1 + 4 \pi A_0} .
	\end{flalign*}
\end{lemma}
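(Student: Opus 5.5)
Under the blow-up assumption $c_\epsilon\to+\infty$, the plan is to bound $\limsup_{\epsilon\to 0}\mathcal{J}_\epsilon$ by $\int_{\mathbb{B}}h_0\,dx+\pi e^{1+4\pi A_0}$, which is the form in which the lemma is used later. This is the Carleson--Chang type upper bound. Because of the lemma giving the upper bound in terms of the neck region, it is enough to control
\[
\lim_{R\to\infty}\limsup_{\epsilon\to0}\int_{\mathbb{B}_{Rr_\epsilon^{1/(1+\epsilon)}}}h_\epsilon e^{4\pi(1+\epsilon)u_\epsilon^2}dx=\limsup_{\epsilon\to0}\frac{\lambda_\epsilon}{c_\epsilon^2}.
\]
Here the equality comes from \eqref{eq: 2.3} together with the rescaling computed in that lemma. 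So the task is to show $\limsup \lambda_\epsilon/c_\epsilon^2\le \pi e^{1+4\pi A_0}$.

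\textbf{Step 1: pass to a small ball.} Fix $\delta\in(0,1)$. By Lemma \ref{lem: 2.6}, $c_\epsilon u_\epsilon\to G_0$ in $C^1_{loc}(\overline{\mathbb{B}}\setminus\{0\})$, and $G_0=-\frac{1}{2\pi}\ln|x|+A_0+\beta(x)$. This gives
\[
\int_{\mathbb{B}\setminus\mathbb{B}_\delta}|\nabla u_\epsilon|^2dx=\frac{1}{c_\epsilon^2}\Big(\frac{1}{2\pi}\ln\frac1\delta+A_0+o_\delta(1)+o_\epsilon(1)\Big)
\]
(integrate by parts using $-\Delta G_0=0$ away from $0$). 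Set $s_\epsilon=\sup_{\partial\mathbb{B}_\delta}u_\epsilon$. Then $s_\epsilon=c_\epsilon^{-1}(G_0(\delta)+o_\epsilon(1))$ by radial symmetry, and define $\zeta_\epsilon=(u_\epsilon-s_\epsilon)^+$ on $\mathbb{B}_\delta$. Its Dirichlet energy satisfies
\[
\tau_\epsilon:=\int_{\mathbb{B}_\delta}|\nabla\zeta_\epsilon|^2dx\le 1-\frac{1}{c_\epsilon^2}\Big(\frac{1}{2\pi}\ln\frac1\delta+A_0+o(1)\Big).
\]

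\textbf{Step 2: apply the concentration lemma.} After scaling $\mathbb{B}_\delta$ to $\mathbb{B}$, the normalized function $\zeta_\epsilon/\sqrt{\tau_\epsilon}$ is radial, nonnegative, has norm $1$ and converges weakly to $0$. On $\mathbb{B}_{Rr_\epsilon^{1/(1+\epsilon)}}$ I expand
\[
u_\epsilon^2\le \frac{\zeta_\epsilon^2}{\tau_\epsilon}+2s_\epsilon c_\epsilon+o(1)=\frac{\zeta_\epsilon^2}{\tau_\epsilon}+\frac{1}{\pi}\ln\frac1\delta+2A_0+o(1).
\]
This uses $\zeta_\epsilon^2(1/\tau_\epsilon-1)\approx c_\epsilon^2\cdot c_\epsilon^{-2}(\frac{1}{2\pi}\ln\frac1\delta+A_0)$, exactly as in Li's / Yang's argument in \cite{10}. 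Since $h_\epsilon(x)\approx|x|^{2\epsilon}$ near $0$ and we have the uniform $\pi e$ bound of the preceding lemma (Carleson--Chang after the change of variables $r\mapsto r^{1/(1+\epsilon)}$), the scaled integral gives
\[
\limsup_{\epsilon\to0}\int_{\mathbb{B}_{Rr_\epsilon^{1/(1+\epsilon)}}}h_\epsilon e^{4\pi(1+\epsilon)u_\epsilon^2}dx\le \delta^{2}e^{4\pi(\frac{1}{\pi}\ln\frac1\delta+2A_0)\cdot\frac12\cdot 2}\cdot\frac{\pi e}{\delta^2}\cdot(1+o_\delta(1)).
\]
Here the factor $\delta^2$ comes from the area scaling and cancels the $\delta^{-2}$ coming from $e^{2\ln(1/\delta)}$, leaving $\pi e^{1+4\pi A_0}$. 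Then I let $\epsilon\to0$, then $R\to\infty$, then $\delta\to0$.

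\textbf{Step 3: conclude.} Combining the bound of Step 2 with the lemma bounding $\limsup\int h_\epsilon e^{4\pi(1+\epsilon)u_\epsilon^2}$ by $\int h_0+\lim_R\limsup$ (neck) gives the stated inequality for the maximizers. Since $u_\epsilon$ attain $\mathcal{J}_\epsilon\ge\int h_\epsilon e^{4\pi u^2}$ for every admissible $u$, the bound transfers to the supremum.

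\textbf{Main obstacle.} I expect the hardest part to be the bookkeeping of the constants in Step 2. One must check that the cross term $2s_\epsilon c_\epsilon$ yields exactly $4\pi A_0+2\ln(1/\delta)$ in the exponent. One must also check that the $\epsilon$-dependence (the factor $1+\epsilon$ and the weight $|x|^{2\epsilon}$) disappears in the limit. I would handle the latter through the radial change of variables $\rho_\epsilon$ of the preceding lemma, which preserves the Dirichlet norm.
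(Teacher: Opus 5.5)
Your outline follows the paper's proof: reduce to the neck $\mathbb{B}_{Rr_\epsilon^{1/(1+\epsilon)}}$, truncate at $s_\epsilon=u_\epsilon(\delta)$, read off the energy outside $\mathbb{B}_\delta$ from $G_0$, and apply the $\pi e$ concentration lemma to the rescaled normalized truncation. The problem is the constant in the exponent in Step 2. That computation is the whole content of the lemma, and as written it is wrong, so your argument does not produce $\pi e^{1+4\pi A_0}$.

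The displayed bound $u_\epsilon^2\le\zeta_\epsilon^2/\tau_\epsilon+2s_\epsilon c_\epsilon+o(1)$ is true but too weak, because it discards the negative term $-\zeta_\epsilon^2(\tau_\epsilon^{-1}-1)$. Since $2s_\epsilon c_\epsilon=2G_0(\delta)+o_\epsilon(1)$, it puts $8\pi G_0(\delta)=4\ln\frac{1}{\delta}+8\pi A_0+o_\delta(1)$ into the exponent. Combined with the single area factor $\delta^{2}$ from rescaling $\mathbb{B}_\delta$ to $\mathbb{B}$, this gives $\pi e\,\delta^{-2}e^{8\pi A_0+o_\delta(1)}$. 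That exceeds the target by the factor $e^{4\pi G_0(\delta)}\to\infty$ as $\delta\to0$.

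Your closing display is also inconsistent. It contains both $\delta^{2}$ and $\pi e/\delta^{2}$, whereas rescaling contributes only
$\int_{\mathbb{B}_\delta}|x|^{2\epsilon}(e^{4\pi(1+\epsilon)\zeta_\epsilon^2/\tau_\epsilon}-1)\,dx\le\delta^{2+2\epsilon}(\pi e+o_\epsilon(1))$.
Taken literally, the display evaluates to $\pi e\,\delta^{-4}e^{8\pi A_0}$. For the same reason, the claim in your last paragraph that the cross term $2s_\epsilon c_\epsilon$ alone yields $2\ln\frac1\delta+4\pi A_0$ in the exponent is off by a factor $2$.

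The fix is the step you mention in words but do not carry out. On the neck, the convergence $\phi_\epsilon\to\phi$ (Lemma 2.2) gives $\zeta_\epsilon=c_\epsilon+O(c_\epsilon^{-1})$. Your energy bound gives $\tau_\epsilon^{-1}-1\ge c_\epsilon^{-2}(G_0(\delta)+o_\epsilon(1))$. Hence $\zeta_\epsilon^2(\tau_\epsilon^{-1}-1)\ge G_0(\delta)+o_\epsilon(1)$, and
\[
u_\epsilon^2=\zeta_\epsilon^2+2s_\epsilon\zeta_\epsilon+s_\epsilon^2\le\frac{\zeta_\epsilon^2}{\tau_\epsilon}-G_0(\delta)+2G_0(\delta)+o_\epsilon(1)=\frac{\zeta_\epsilon^2}{\tau_\epsilon}+\frac{1}{2\pi}\ln\frac{1}{\delta}+A_0+o_\delta(1)+o_\epsilon(1).
\]
This is the final line of the paper's (2.11). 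The exponent now acquires $2(1+\epsilon)\ln\frac1\delta+4\pi(1+\epsilon)A_0$. The resulting factor $\delta^{-2(1+\epsilon)}$ cancels the $\delta^{2+2\epsilon}$ from the rescaled concentration lemma. Letting $\epsilon\to0$, then $R\to\infty$, then $\delta\to0$ gives $\pi e^{1+4\pi A_0}$.

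This sharp constant is exactly what is later compared against the test functions, so the factor-two slip is not cosmetic. With the corrected inequality, the rest of your outline goes through as in the paper. Like the paper, you also need the concentration lemma for the rescaled weight $\delta^{-2\epsilon}h_\epsilon(\delta y)\approx|y|^{2\epsilon}$. That requires $h_\epsilon(x)\le(1+o_\delta(1))|x|^{2\epsilon}$ on $\mathbb{B}_\delta$, uniformly in $\epsilon$.
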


\begin{proof}
	Let $\mathbf{n}$ be the unit outward normal to $\partial \mathbb{B}_{\delta}$ as $0< \delta <1$. We see that
	\begin{flalign*}
		\int_{ \mathbb{B} \setminus \mathbb{B}_{\delta} } G_0 \cdot \Delta G_0 dx = 0.
	\end{flalign*}
    By the divergence theorem, we have that
    \begin{flalign*}
    	 \int_{ \mathbb{B} \setminus \mathbb{B}_{\delta} } |\nabla G_0|^2 dx & =  \int_{\partial  \left( \mathbb{B} \setminus \mathbb{B}_{\delta} \right) } G_0 \frac{ \partial G_0 }{ \partial \mathbf{n}} ds - \int_{ \mathbb{B} \setminus \mathbb{B}_{\delta} } G_0 \Delta G_0 dx \\
    	 & = G_0 ( \delta ).
    \end{flalign*}
    From the definition of Green function, we obtain that
    \begin{flalign*}
    	G_0 (\delta) = - \frac{1}{2 \pi } \ln \delta + A_0 + o_{\delta} (1),
    \end{flalign*}
    since $G_0$ is a weak solution of $- \Delta G_0 = \delta_0$.
    Hence, we obtain that 
    \begin{flalign*}
    	\int_{ \mathbb{B} \setminus \mathbb{B}_{\delta}} | \nabla G_0|^2 dx = - \frac{1}{2 \pi} \ln \delta + A_0 + o_{\delta} (1) + o_{\epsilon} (1).
    \end{flalign*}
    With the aid of \eqref{eq: 2.9}, we have that
    \begin{flalign*}
    	\int_{ \mathbb{B} \setminus \mathbb{B}_{\delta}} | \nabla u_{\epsilon}|^2 dx = c_{\epsilon}^{- 2 } \int_{ \mathbb{B} \setminus \mathbb{B}_{\delta}} \left( | \nabla G_0|^2 + o_{\epsilon} (1) \right) dx\\
    	= c_{\epsilon}^{- 2 }  \left(- \frac{1}{2 \pi } \ln \delta + A_0 + o_{\delta} (1) + o_{\epsilon} (1) \right).
    \end{flalign*}
     Denote $\kappa_{\epsilon} = \int_{  \mathbb{B}_{\delta}} | \nabla u_{\epsilon}|^2 dx $, it follows that
     \begin{flalign*}
     	\kappa_{\epsilon} & = 1 - \int_{ \mathbb{B} \setminus \mathbb{B}_{\delta}} | \nabla u_{\epsilon}|^2 dx \\
     	& = 1- c_{\epsilon}^{-2 }  \left(- \frac{1}{ 2 \pi} \ln \delta + A_0 + o_{\delta} (1) + o_{\epsilon} (1) \right).
     \end{flalign*}
     We set $s_{\epsilon} = \underset{\partial \mathbb{B}_{\delta}}{\sup}  u_{\epsilon} = u_{\epsilon}  ( \delta ) $ and $\tilde{u}_{\epsilon} = \left( u_{\epsilon} - s_{\epsilon} \right)^+ $. Obviously, $s_{\epsilon} = c_{\epsilon}^{ -1 } \left( \underset{\partial \mathbb{B}_{\delta} }{\sup } G_0 + o_{\epsilon} (1) \right)$, $\tilde{u}_{\epsilon} \in W_0^{1,2} \left( \mathbb{B}_{\delta} \right)$ and $\int_{\mathbb{B}_{\delta}} | \nabla  \tilde{u}_{\epsilon}|^2 dx \leq  \int_{\mathbb{B}_{\delta}} | \nabla u_{\epsilon} |^2 dx $. Then we have that
     \begin{flalign*}
     	4 \pi \left(1 + \epsilon \right)u_{\epsilon}^2 & \leq 4 \pi \left( 1+ \epsilon \right) \left( s_{\epsilon}  + \tilde{u}_{\epsilon} \right)^2.
     \end{flalign*} 
     For sufficiently small $ \epsilon$, we have  $\mathbb{B}_{R r_{\epsilon}^{ \frac{1}{1+ \epsilon} }   } \subset \mathbb{B}_{\delta}$, and hence
     \begin{flalign}
     	\int_{\mathbb{B}_{R r_{\epsilon}^{ \frac{1}{1+ \epsilon} }   }} 
     	h_{\epsilon } (x) e^{ 4 \pi \left(1 + \epsilon \right) \frac{ \tilde{u}_{\epsilon}^2 }{\kappa_{\epsilon} }  } dx & \leq \int_{\mathbb{B}_{R r_{\epsilon}^{ \frac{1}{1+ \epsilon} }   }} 
     	h_{\epsilon }(x) \left(e^{ 4 \pi \left(1 + \epsilon \right) \frac{ \tilde{u}_{\epsilon}^2 }{\kappa_{\epsilon} }  }- 1 \right) dx + o_{\epsilon} (1) \notag  \\
     	& \leq \int_{\mathbb{B}_{\delta   }} 
     	h_{\epsilon } (x) \left( e^{ 4 \pi \left(1 + \epsilon \right) \frac{ \tilde{u}_{\epsilon}^2 }{\kappa_{\epsilon} }  }- 1 \right) dx + o_{\epsilon} (1).\label{eq: 2.10}
     \end{flalign}
     Also we have that
     \begin{flalign}
     	4 \pi \left(1 + \epsilon \right)u_{\epsilon}^2 & \leq 4 \pi \left( 1+ \epsilon \right) \left( s_{\epsilon}  + \tilde{u}_{\epsilon} \right)^2 \notag \\
     	& \leq 4 \pi (1+ \epsilon) \left( c_{\epsilon}^{-2} G^2 (\delta) + 2 c_{\epsilon}^{-1} G (\delta) \tilde{u}_{\epsilon} +  \tilde{u}_{\epsilon}^2 \right)  + o_{\epsilon} (1)\notag \\
     	& \leq 4 \pi (1+ \epsilon) \left( 2 G (\delta) + \tilde{u}^2_{\epsilon} \right)  + o_{\epsilon} (1) \notag \\
     	& \leq 4 \pi (1+ \epsilon) \frac{ \tilde{u}_{\epsilon}^2 }{\kappa_{\epsilon}} - 2 (1+ \epsilon) \ln \delta + 4 \pi (1 + \epsilon) A_0 + o_{\epsilon}(1).\label{eq: 2.11}
     \end{flalign} 
     From equations \eqref{eq: 2.10} and \eqref{eq: 2.11} together with  the lemma \ref{lem: 2.8}, we obtain
     \begin{flalign}
     	\int_{\mathbb{B}_{R r_{\epsilon}^{ \frac{1}{1+ \epsilon} }   }} 
     	h_{\epsilon } (x) e^{ 4 \pi  \left(1 + \epsilon \right) u_{\epsilon}^2   } dx & \leq \int_{\mathbb{B}_{R r_{\epsilon}^{ \frac{1}{1+ \epsilon} }   }} 
     	h_{\epsilon } (x) e^{ 4 \pi \left( 1+ \epsilon \right) \left( s_{\epsilon}  + \tilde{u}_{\epsilon} \right)^2  } dx \notag  \notag \\
     	& \leq \int_{\mathbb{B}_{R r_{\epsilon}^{ \frac{1}{1+ \epsilon} }   }} 
     	h_{\epsilon } (x) e^{ 4 \pi \left( 1+ \epsilon \right) \left( \frac{\tilde{u}_{\epsilon}^2  }{ \kappa_{\epsilon} } - \frac{1}{2 \pi} \ln \delta + A_0 + o_{\epsilon}(1) \right) } dx \notag  \\
     	& \leq \delta^{ -2(1+ \epsilon) } e^{ 4 \pi (1+ \epsilon) ( A_0 + o_{\epsilon}(1) )  } \notag  \\
     	& \left( \int_{\mathbb{B}_{\delta   }} 
     	|x|^{2 \epsilon}  \left(e^{ 4 \pi \left(1 + \epsilon \right) \frac{ \tilde{u}_{\epsilon}^2 }{\kappa_{\epsilon} }  } - 1 \right) dx + o_{\epsilon} (1) \right) \notag \\
     	& \leq \delta^{ -2 \epsilon } \pi e^{ 1 + 4 \pi A_0 + o_{\epsilon}(1) } + o_{\epsilon}(\delta), \label{eq: 2.12}
     \end{flalign}
     with $o_{\epsilon} (\delta) \rightarrow 0$ as $\epsilon \rightarrow 0$ for any fixed $\delta > 0$. Letting $\epsilon \rightarrow 0$ firstly and $R \rightarrow + \infty$ laterly in \eqref{eq: 2.12}, we have
     \begin{flalign*}
     	& \underset{R \rightarrow + \infty}{\lim} \underset{\epsilon \rightarrow 0}{\lim \sup} \int_{\mathbb{B}_{R r_{\epsilon}^{ \frac{1}{1+ \epsilon} }   }} 
     	h_{\epsilon } (x)e^{ 4 \pi \left(1 + \epsilon \right) u_{\epsilon}^2   } dx  \leq \pi e^{1 + 4 \pi A_0}.
     \end{flalign*}
     Then we conclude that
     \begin{flalign*}
     	\underset{\epsilon \rightarrow 0}{\lim \sup} \int_{\mathbb{B}} h_{\epsilon} (x) e^{ 4 \pi (1+ \epsilon) u_{\epsilon}^2}   dx \leq \int_{\mathbb{B}} h_{0} (x) dx +  \pi e^{1 + 4 \pi A_0}.
     \end{flalign*}
     The proof is completed.
\end{proof}

\section{Exclusion of Blow-up Phenomenon}

In this section, we construct a sequence of functions $\phi_{\epsilon} (x) \in W_0^{1,2} \left( \mathbb{B} \right) $ to show that
\begin{flalign*}
	 \int_{\mathbb{B}} h_{\epsilon} (x) e^{ 4 \pi (1+ \epsilon) \phi_{\epsilon}^2 } dx > \int_{\mathbb{B}} h_{0} (x) dx +  \pi  e^{1 + 4 \pi A_0}.
\end{flalign*}
From \cite{15}, we set 
\begin{equation*}
	\phi_{\epsilon} (x)= 
	\begin{cases}
		& c+ c^{-1} \left( - \frac{ 1  }{4 \pi} \ln \left( 1 + \pi |\frac{x}{\epsilon}|^2 \right) + b \right), ~ |x| \leq R \epsilon;\\
		& \frac{G}{c}, ~ R \epsilon \leq |x|\leq 1 .
	\end{cases}
\end{equation*}
In order to ensure that $\phi_{k}(x) \in W_0^{1,2} \left( \mathbb{B} \right)$, we require that
\begin{flalign*}
	c+ c^{-1} \left( - \frac{1 }{4 \pi} \ln \left( 1 + \pi |R|^2 \right) + b \right) = \frac{G \left( R \epsilon \right)}{c},
\end{flalign*}
which imples that
\begin{flalign*}
	c^2= G \left( R \epsilon \right) + \frac{1}{4 \pi } \ln \left( 1 + \pi | R|^2 \right) - b.
\end{flalign*}

From $\int_{\mathbb{B}} | \nabla \phi_{\epsilon}(x)|^2 dx =1$, we have 
\begin{flalign*}
	\int_{|x| \leq R \epsilon } | \nabla \phi_{\epsilon} (x)|^2 dx 	& = 2 c^{ - 2 } \left\lbrace \ln \left( 1 + \pi R^2 \right) -  1 + O\left( R^{-2 } \right)  \right\rbrace,
\end{flalign*}
and
\begin{flalign*}
	\int_{ R \epsilon \leq |x| \leq 1 } | \nabla \phi_{\epsilon}|^2 dx = c^{- 2} \left( - \frac{1}{2 \pi} \ln |R \epsilon| + A_0 + O\left( (R \epsilon)^{2 \gamma} \right) \right),
\end{flalign*}
which together imply
\begin{flalign}
	c^{2} = 2 \left( \ln \pi + 2 \ln R - 1 + O \left( R^2 \right) \right) + G(R \epsilon). \label{eq: 3.1}
\end{flalign}
On the other hand, we obtain
\begin{flalign}
	b= - c^2 + G( R \epsilon) + \frac{1}{4 \pi } \ln \left( 1+ \pi R^2 \right). \label{eq: 3.2}
\end{flalign}
In $\mathbb{B}_{ R \epsilon}$, we have
\begin{flalign} \label{eq: 3.3}
	4 \pi  \phi_{\epsilon}^2 & = 4 \pi c^{2} \left(  1 + \frac{1}{ c^2  } \left( - \frac{1}{ 4 \pi} \ln \left( 1 + \pi |\frac{x}{\epsilon}|^2 \right) + b \right) \right)^2 \notag \\
	& \geq 4 \pi c^2 \left(  1 + \frac{1}{ c^2  } \left( - \frac{1}{ 4 \pi } \ln \left( 1 + \pi |\frac{x}{\epsilon}|^2 \right) + b \right) \right) \notag \\
	& = 4 \pi c^2 + 8 \pi b - 2 \ln \left( 1 + \pi | \frac{x}{\epsilon}|^2 \right).
\end{flalign}
We consider the right-hand side of the equation \eqref{eq: 3.3},
\begin{flalign*}
	4 \pi c^2 + 8 \pi b \geq 2R + 1 + \ln \pi + 4 \pi A_0 + O \left( R^{ -2 } \right),
\end{flalign*}
since
\begin{flalign*}
	4 \pi \left( c^2 + 2 b \right) & = 4 \pi \left\lbrace c^2 + 2 \left( - c^2 + G( R \epsilon ) + \frac{1}{4 \pi} \ln \left( 1+ \pi R^2  \right) \right) \right\rbrace \\
	& = - 2 \left\lbrace \ln \left( 1+ \pi R^2\right)  - 1 + O \left( R^{ - 2 }\right)  \right\rbrace + 4 \pi G \left( R \epsilon \right) + 2 \ln \left(  1 + \pi R^2 \right)\\
	& \geq - 2 \ln \left( 1 + \pi R^2 \right) + 1 + O \left( R^{- 2  } \right) + 4 \pi A_0 \\
	& \geq - 2 \ln \left( 1 + \pi R^2 \right) + 1 + O \left( R^{- 2  } \right) - 2 \ln | R \epsilon | + 4 \pi A_0 .
\end{flalign*}
We consider an approprite $R$ and $\epsilon$ such that
\begin{flalign*}
	R^{ - 4 } |\epsilon|^{ -2 } \rightarrow + \infty
\end{flalign*}
as $R \rightarrow + \infty$, $\epsilon \rightarrow 0$, and $R \epsilon \rightarrow 0$. Then
\begin{flalign*}
	e^{- 2 \ln \left( 1+ \pi R^2 \right) - 2 \ln | R \epsilon| } = \left( 1 + \pi R^{2} \right)^{ - 2 } \cdot |R \epsilon|^{-2} \rightarrow + \infty.
\end{flalign*}
Moreover, we have
\begin{flalign}
	\int_{|x| < R \epsilon } h_{\epsilon} (y) e^{ - 2 \ln \left(1 + \pi |\frac{x}{\epsilon} |^2 \right) } dx & = \epsilon^2 \int_{ |y| < R } \frac{ h_{\epsilon}(y) }{ \left( 1+ \pi |y|^2 \right)^2 } dy \notag \\
	& = \epsilon^2 \left( \int_{ |y| < R} h_{\epsilon} (y) dy + O \left( R^{ - 2 } \right) \right). \label{eq: 3.4}
\end{flalign}
Combining equations \eqref{eq: 3.1}, \eqref{eq: 3.2}, \eqref{eq: 3.3} and \eqref{eq: 3.4}  yields 
\begin{flalign*}
	\int_{ |x| < R \epsilon} h_{\epsilon} (x) e^{ 4 \pi \phi_{\epsilon}^2 } dx & \geq \pi e^{1 + 4 \pi A_0 } + O \left( R^{ -2 } \right) .
\end{flalign*}
From $e^t \geq 1+t$, we have
\begin{flalign*}
	\int_{ R \epsilon \leq |x| <1 } h_{\epsilon} (x) e^{ 4 \pi \phi_{\epsilon}^2    } dx> \int_{\mathbb{B}} h_{0} (x) dx + O \left( R^{-2} \right).
\end{flalign*}
From the two contradictory inequalities, the sequence $u_{\epsilon}$ is uniformly bounded. Elliptic estimates of the equation \eqref{eq: 2.1} then yield that $u_{\epsilon}$ converges to the extremal function in $C^1 \left( \overline{\mathbb{B}} \right)$, and hence Theorem 1.1 is proved.

\textbf{Acknowledgments}

We would like to take this opportunity to thank the editor and the
reviewers again for their constructive comments and useful suggestions, and the time and eﬀorts
they have spent in the review process.

\textbf{Conflict of interest statement}

The authors declare no conﬂict of interest.






\end{document}